\newcommand{\prodlambda}{\prod_{\lambda}(\{T_1, \ldots, T_i\})}
\newcommand{\prodk}{\prod_{k}(T_1, \ldots, T_i)}
\newcommand{\ind}[1]{\mathds{1}_{\{#1\}}} 
\newcommand{\E}[1]{\mathbb{E}\left[#1\right]} 
\newcommand{\Pro}[1]{\mathbb{P}\left(#1\right)}
\newcommand{\st}{\ensuremath \mathit{x}}
\newcommand{\stsp}{\ensuremath \mathcal{X}}
\newcommand{\CC}{\mathbb{C}}
\newcommand{\NN}{\mathbb{N}}
\newcommand{\al}{\alpha}
\newcommand{\la}{\lambda}
\newcommand{\mbr}[4]{\mbox{release}_{#1, #2}(#3, #4)}
\newcommand{\rprob}[4]{r_{#1, #2}(#3, #4)}
\newcommand{\mbs}[4]{\mbox{shift}_{#1, #2}(#3, #4)}
\newcommand{\sprob}[4]{s_{#1, #2}(#3, #4)}
\newcommand{\tavail}{\mathcal{T}^{(a)}}
\newcommand{\stOI}{\ensuremath {\mathit{x}}^{(OI)}}
\theoremstyle{plain}
\newtheorem{condition}{Condition}
\newtheorem{theorem}{Theorem}
\newtheorem{lemma}[theorem]{Lemma}
\newtheorem{corollary}[theorem]{Corollary}
\theoremstyle{remark}
\newtheorem{remark}{Remark}
\begin{document}
\title{A token-based central queue with order-independent service rates}
\author[1,3,5,6]{U. Ayesta}
\author[2,5]{T. Bodas}
\author[,4]{J.L. Dorsman\thanks{Corresponding author: j.l.dorsman@uva.nl}}
\author[1,5]{I.M. Verloop}
\affil[1]{CNRS, IRIT, 2 rue Charles Camichel, 31071 Toulouse, France}
\affil[2]{CNRS, LAAS, 7 Avenue du colonel Roche, 31400 Toulouse, France}
\affil[3]{IKERBASQUE - Basque Foundation for Science, 48011 Bilbao, Spain}
\affil[4]{Korteweg-de Vries Institute for Mathematics, University of Amsterdam, P.O. Box 94248, 1090 GE Amsterdam, The Netherlands}
\affil[5]{Universit\'e de Toulouse, INP, 31071 Toulouse, France}
\affil[6]{UPV/EHU, University of the Basque Country, 20018 Donostia, Spain}

\maketitle

\begin{abstract}
We study a token-based central queue with multiple customer types. Customers of each type arrive according to a Poisson process and have an associated set of compatible tokens. Customers may only receive service when they have claimed a compatible token. If upon arrival, more than one compatible token is available, an \emph{assignment rule} determines which token will be claimed. The  \emph{service rate} obtained by a customer is state-dependent, i.e., it depends on the set of claimed tokens and on the number of customers in the system. Our first main result shows that, provided the \emph{assignment rule} and the \emph{service rates} satisfy certain conditions, the steady-state distribution has a product form. We show that our model subsumes known families of models that have product-form steady-state distributions including the order-independent queue of \cite{Krzesinski11} and the model of \cite{Visschers12}. Our second main contribution involves the derivation of expressions for relevant performance measures such as the sojourn time and the number of customers present in the system.  We apply our framework to relevant models, including an M/M/K queue with heterogeneous service rates, the MSCCC queue, multi-server models with redundancy and matching models. For some of these models, we present expressions for performance measures that have not been derived before. \\

Keywords: product form, token-based, order-independent queue, redundancy system, matching model
\end{abstract}



\section{Introduction}
The discovery of queueing systems with a steady-state product-form distribution is probably one of the most fundamental contributions in queueing theory. In a pioneering work,  \cite{Jackson57} showed that in a queueing network formed by  M/M/1 nodes, the joint steady-state distribution is given by the product of the marginal distributions of the individual nodes. 
Roughly speaking, this implies that the stationary distribution of the network can be obtained by multiplying the stationary distributions of the individual nodes assuming that each node is in isolation. Due to this property, the analysis of a queueing network reduces to that of single-node queues, simplifying the analysis  tremendously.  Product-form distributions  provide insight into the impact of parameters on the performance and allow efficient calculation of performance measures. As a consequence, since Jackson's discovery, considerable effort has been put in understanding the conditions such that a stochastic model has a product-form steady-state distribution. An important step forward was made by \cite{BCMP75} and \cite{kelly76}, who introduced BCMP networks and Kelly networks, respectively, which have product-form steady-state distributions. These networks demonstrate that models with multiple types of customers and general service time distributions could also have a product-form distribution. Since then, further studies have shown that networks with negative arrivals, instantaneous signals and blocking might have a product-form distribution, see \cite{Xiuli11} for an overview.

Recent years have witnessed a surge of interest in parallel server models with different types of customers. The main application is in the study of data centers, which consists of a pool of resources that are interconnected by a communication network. Indeed, data centers  provide the main infrastructure to support many internet applications, enterprise operations and scientific computations. In two relevant studies, \cite{Visschers12} and \cite{Krzesinski11}, sufficient conditions have been obtained for a multi-server system to have a product form.  We note that these product-form distributions are not expressed as the product of per-type or per-server terms. In fact, they are expressed as a product of terms that correspond to a unique customer in the system. In that respect, they do not allow an interpretation in terms of a product of marginal distributions, as is the case with classical product-form distributions for Jackson, BCMP and Kelly networks. A notable difference between the two papers is in the state descriptor considered therein. In the multi-type customer and server model of \cite{Visschers12}, the authors consider an aggregated descriptor that keeps track of the servers being active but not of the type of customers being served or waiting. On the other hand, in the order-independent queue of \cite{Krzesinski11}, the state descriptor keeps track of the type of customers in the system, but not of the servers being active. These two modelling approaches have led to two separate streams of papers, where each of the approaches covers applications that are not covered by the other. Some of the applications studied are systems with blocking, redundancy and computer clusters, see Section~\ref{sec:applications} for more details. A natural question that arises is whether the original models of \cite{Visschers12} and \cite{Krzesinski11} can be generalised while preserving the product-form distribution in steady state.

We answer this question in the affirmative in this paper. We analyse a token-based central queue with multiple types of customers and multiple tokens. As will be proved in the paper, this model is a generalisation of both the model of  \cite{Visschers12} and the order-independent queue of \cite{Krzesinski11}. 
Customer of each type arrive according to a Poisson process and have an associated set of compatible tokens. To receive service, a customer must claim a compatible token. Therefore, an arriving customer will immediately claim a compatible token if there is one available, otherwise it will wait until it can claim one. As will become clear later on in the paper, the meaning of a token is application-dependent. It might represent a physical server or it might represent the total service rate allocated to a given  customer type. If upon a customer arrival more than one compatible token is available, an \emph{assignment rule} determines which token will be claimed by the customer.  A customer without a token receives no service and a customer holding a token receives service at a rate given by a state-dependent \emph{service rate function} that satisfies certain conditions. As we will show later, these conditions are reminiscent of those in the order-independent queue. 

Our first main result shows that, provided the \emph{assignment rule} and the \emph{service rate function} satisfy the required conditions, the steady-state distribution has a product form. 
As in the case of \cite{Visschers12} and \cite{Krzesinski11}, this product-form distribution cannot be  expressed as the product of per-type or per-token terms.
We further show that the order-independent  queue and the multi-type customer and server model of \cite{Visschers12}  are particular instances of our model and that our model includes examples that were not covered by either. In other words, our model and main results provide a unifying framework for parallel-server models with a product-form distribution. Our second main contribution is that we use the steady-state distribution of the general model to characterise transforms of relevant performance measures, including the sojourn time and the number of customers in the system.  We illustrate the applicability of the framework by computing the steady-state distribution and analysing the performance of many relevant models, including an $M/M/K$ queue with heterogeneous service rates, the MSCCC queue, multi-server models with redundancy and  matching models. For some of these models, we present expressions for performance measures that have not been derived before. It is important to note that, even though our model is based on a central-queue architecture, some of the applications, in particular the redundancy models, correspond to topologies without a central queue, where instead every server has its own queue. We explain this in more detail in Section~\ref{sec:applications}.

 
 





The rest of the paper is organised as follows. In the next section, we discuss studies related to this paper. Section \ref{sec:model} then describes the token-based central queue that we study in more detail and introduces the required notation. Section \ref{sec:statDist} shows that the token-based central queue has a product-form stationary distribution, which allows for the calculation of other performance measures in Section \ref{sec:perfEval}. We show in Section \ref{sec:generalisation} that the models of \cite{Krzesinski11} and \cite{Visschers12} are captured by our model, after which we discuss applications of our model in Section \ref{sec:applications}.

\section{Related work}\label{sec:related_work}
As mentioned in the introduction, there has been a surge of interest in multi-server queueing models in recent years. The main two references related to our work are \cite{Visschers12} and \cite{Krzesinski11}, which identify classes of models that have a product-form stationary measure. 

Subsequently, several studies have used the results of these two models to analyse a variety of other models.  An important application area that has received a lot of attention is formed by redundancy models. While there are several variants  of a redundancy-based system, the  general notion of redundancy is to  create multiple copies of the same customer that will be sent to a subset
of servers. Depending on when replicas are deleted, there are two classes of redundancy systems: cancel-on-start (COS) and cancel-on-completion (COC).
In redundancy systems with COC, once a copy has completed service, the other copies
are deleted and the customer is said to have received service. On the other hand, in redundancy
systems with COS, copies are removed as soon as one copy starts being served.  
In \cite{Bonald17a}, the authors observe that the COC model is a special case of the order-independent queue \cite{Krzesinski11}, which
enables the authors to derive the steady-state distribution directly. We also refer to  \cite{Gardner16} for a thorough analysis of the COC system. On the other hand, \cite{Ayesta18} shows that while the COS based redundancy system is not an order-independent queue,  it fits within the multi-type customer and server model of \cite{Visschers12}. They also show that, while the COC model does not the framework of \cite{Visschers12}, it does fit an extension of it, where the state descriptor used in \cite{Visschers12} is endowed with a more general service rate function. We will use the resulting state descriptor also in this paper (see Section~\ref{sec:model} for more details).

An important application area, which fits the framework of \cite{Visschers12}, is that of matching models, which have been studied in several recent papers, see for instance \cite{Adanmatching18}. We also refer to \cite{AdanRighterWeiss1} and \cite{AdanRighterWeiss2}, where the authors explore the relation between redundancy and matching models. In Section~\ref{sec:applications}, we apply our token-based approach to derive the steady-state distribution of a large family  of  matching models.

Another important related work is \cite{AdanWeiss}. The model considered therein is similar to the one of \cite{Visschers12} with the exception that the assignment policy \emph{`assign longest idle server'} (ALIS) is used. Under the ALIS-policy, a new arrival that could be served by more than one inactive server, is assigned to the longest-idle server. To implement this policy, the state descriptor is enriched with information on the idleness of every inactive server. 
The authors prove that the steady-state distribution of this model has a product form.  In our paper, we do not consider the ALIS variant, however, from the analysis of \cite{AdanWeiss}, we expect that all our results would carry over to this case. We discuss this in more detail in Section~\ref{sec:statDist}.

 
 
 




\section{Model description}\label{sec:model}

In this section, we describe the token-based central queue model in more detail.

\emph{Customers and tokens.\;} The model that we study represents a central-queue system where the customers may be of mutually different types (or classes). 
The set of all customer types is denoted by $\mathcal{C}$ and customers of type $c \in \mathcal{C}$ arrive according to a Poisson process with rate $\lambda_c$. As a result, the total arrival rate of customers to the system is $\lambda := \sum_{c \in \mathcal{C}}\lambda_c$. A distinguishing feature of this model is the fact that in order for customers to receive service, they must hold a \emph{token}. To this end, a set of $K$ tokens denoted by $\mathcal{T} = \{t_1,\ldots,t_K\}$ is also associated with the model. In particular, a customer type $c \in \mathcal{C}$ is characterised by a token set $\mathcal{T}_c$ which consists of the compatible tokens that can be held by customers of type $c$. Similarly, associated with a token $t \in \mathcal{T}$ is a set of customer types that can choose the token, denoted henceforth by $\mathcal{C}_t$. Clearly, $\mathcal{T}_c \subseteq \mathcal{T}$ and $\mathcal{C}_t \subseteq \mathcal{C}.$ \newline

\emph{Assignment of customers to tokens.\;} 
At any point in time, the set of available tokens is denoted by $\tavail$, $\tavail \subseteq \mathcal{T}$, while the set of unavailable tokens is given by $\mathcal{T} \backslash \tavail$. To receive service, customers are required to claim a compatible token. Hence, when a customer of type $c \in \mathcal{C}$ arrives, it will claim a single token from the set $\mathcal{T}_c\cap \tavail$ (if it is non-empty), and then join the central queue. In case no compatible token is available upon arrival (i.e.\ $|\mathcal{T}_c\cap \tavail|=0$), the customer will join the queue and wait until a token in the set $\mathcal{T}_c$ becomes available. If multiple compatible tokens are available, i.e., $|\mathcal{T}_c \cap \tavail| > 1,$ then an \textit{assignment rule} decides which of the tokens will be claimed by the arriving customer. More particularly, this assignment rule constitutes a randomised policy which, given $\tavail$ and the class of the arriving customer, dictates the probability with which the customer should claim a particular token. We assume this assignment rule to satisfy a so-called \textit{assignment condition}, which we elaborate on later in this section.  Once a token $t$ is selected by a customer, it is no longer available for selection (i.e. $\tavail := \tavail \backslash \{t\}$) until the customer has completed service. Upon release, the token will immediately be reclaimed by the longest waiting tokenless customer of a type from the set $\mathcal{C}_{t}$. If there are no such customers, the token is added back to the set $\tavail$ such that $\tavail := \tavail \cup \{t\}$. We shall refer to customers with tokens as \textit{active customers} and identify such customers with the token associated with them. Customers in the central queue without tokens will be referred to as \textit{inactive customers}. \newline

\emph{Departure rates of customers.\;} 
We assume service requirements of customers to be exponentially distributed. In light of the model's token mechanism, this means that the departure rate of active customers from the system is non-negative, while that of inactive customers is zero. Throughout the paper, we assume that the departure rates associated with active customers satisfy a certain condition, which is specified below. Since this condition is reminiscent of the order-independent queue as introduced in \cite{Krzesinski11}, we call this the \emph{order-independent condition.} 
\newline

\emph{Markovian state descriptor.\;}
Due to the memoryless properties of the arrival and departure processes, the token-based central queue can be interpreted as a Markov process. We now introduce the state descriptor that we use to analyse this model. We will show in Section \ref{sec:transitionRatesBalanceEquations} that this state descriptor leads to a Markov system by stating its balance equations. The state descriptor that we use for the token-based central queue is of the form $({T}_{1}, n_1,\ldots, T_i, n_i)$.  This descriptor retains the order of arriving customers in the central queue from left to right. When the model is in state $({T}_{1}, n_1,\ldots,{T}_{i-1}, n_{i-1}, T_i, n_i)$, it has $i$ active customers which have claimed tokens $T_1, \ldots, T_i$. Furthermore, there are $n_j$ {inactive customers} in the central queue that have arrived between the two customers that have claimed tokens $T_j$ and $T_{j+1}$, respectively, for $1\leq j \leq i-1$. Inactive customers at the end of the queue are denoted by $n_i$. Since tokens are always claimed by the longest waiting eligible customer, we have that e.g. $n_1$ represents inactive customers which have token $T_1$ as their only compatible token. The set of such customer types is denoted by $\mathcal{U}(\{ T_1\}) := \{c \in \mathcal{C}: \mathcal{T}_c=\{T_1\}\}$. In general, for $1 \leq j \leq i$, we denote the set of customer types that can claim tokens only from the set $\{T_1, \ldots, T_j\}$ by $\mathcal{U}(\{ T_1, T_2, \ldots, T_j\}) := \{c \in \mathcal{C}: \mathcal{T}_c \subseteq \{ T_1,\ldots T_j \} \}$. Thus, the customer types of the $n_j$ customers between those with tokens $T_j$ and $T_{j+1}$ must belong to the set $\mathcal{U}(\{ T_1, T_2, \ldots, T_j\})$. As the state descriptor retains the order of arrival, the oldest customer in a state is represented by token $T_1$. The newest customer is one of the $n_i$ customers, or in case $n_i=0$, it is the active customer with token $T_i$. Furthermore, when $1 \leq j < k \leq i$, all the $n_j$ customers between $T_j$ and $T_{j+1}$ have arrived before the $n_k$ customers between $T_{k}$ and $T_{k+1}$. We henceforth denote the state space of the resulting Markov process by $\stsp$, where any generic state $\st \in \stsp$ is of the type $\st = ({T}_{1}, n_1, \ldots, T_i, n_i)$. The only exception is the empty state with no customers present, which we denote by $(0)$. \newline

\emph{Assignment rule and assignment condition.\;} 
Recall that in case multiple compatible tokens are available upon the arrival of a customer, the \textit{assignment rule} of the system determines the probability with which any of these tokens is assigned to the customer. Furthermore, in state $\st = ({T}_{1}, n_1, \ldots, T_i, n_i)$, the arrival rate of \textit{customers} that will initially be inactive is given by $\lambda_{\mathcal{U}(\{{T}_1,\ldots,{T}_i\})} := \sum_{c \in \mathcal{U}(\{{T}_1,\ldots,{T}_i\})}\lambda_c$, while the arrival rate of \textit{customers} that become active immediately is given by $\lambda - \lambda_{\mathcal{U}(\{{T}_1,\ldots,{T}_i\})}$. Given the nature of the assignment rule, we denote by $\lambda_{t}(\{{T}_1,\ldots,{T}_j\})$ the rate at which arriving customers claim token $t$, provided that $\{T_1, \ldots, T_j\}$ is the set of all unavailable tokens. 
While $\lambda_{t}(\{{T}_1,\ldots,{T}_j\})$ depends on the assignment rule, it holds for any assignment rule that 
\begin{equation}
\label{eq:lambda}
\lambda - \lambda_{\mathcal{U}(\{{T}_1,\ldots,{T}_i\})} = 
\sum_{{t}\in \mathcal{T}\backslash\{{T}_1,\ldots,{T}_i\} } \lambda_{{t}}(\{{T}_1,\ldots,{T}_i\}).
\end{equation}
As in \cite{Visschers12}, for the system to have a product-form stationary distribution, we require that any assignment rule satisfies the following assignment condition.
\begin{condition}\label{cdn:assignment}
An assignment rule is said to satisfy the assignment condition if for any possible combination of $i$ unavailable tokens $T_1, \ldots, T_i$, $i=1, \ldots, K$, it holds that
\begin{equation} 
\prod_{j=1}^{i} \lambda_{T_j}(\{T_1, \ldots T_{j-1} \}) = \prod_{j=1}^{i} \lambda_{\bar{T}_j}(\{\bar{T}_1, \ldots \bar{T}_{j-1} \})
 \end{equation}
for every permutation $\bar{T}_1, \ldots \bar{T}_{i}$ of $T_1, \ldots T_{i}.$
\end{condition}
It is shown in \cite{AdanHurkensWeiss} that there always exists at least one assignment rule for which the assignment condition is satisfied. As we will also see in Section \ref{sec:applications}, the assignment condition generally allows for a rather large set of assignment rules. \newline

\emph{Order-independent condition.\;} To state the order-independent condition, we require additional notation. For any state $\st = (T_1, n_1, \ldots, T_i, n_i)$, let $\mu_{T_j}(\st)$ denote the departure rate of the active customer holding token $T_j$. Furthermore, let $\mu(\st) := \sum_{j=1}^{i} \mu_{T_j}(\st)$ denote the total departure rate in state $\st$. Additionally, we denote by $\phi(\st) = i+\sum_{j=1}^i n_j$ the total number of customers in state $\st$. The order-independent condition now reads as follows.

\begin{condition}\label{cdn:OI}
The departure rates of the model are said to satisfy the order-independent condition if in a given state $\st = (T_1, n_1, \ldots, T_i, n_i)$, each of the rates $\mu_{T_j}(\st)$, $j=1, \ldots, i$, can be written as
 \begin{equation}
  \mu_{T_j}(\st) = \eta(\phi(\st)) s_j(T_1, \ldots, T_i),
 \end{equation}
where
\begin{enumerate}
\item $s_j(\cdot)$ is a non-negative real-valued function for which $s_j(T_1, \ldots, T_i) = s_j(T_1, \ldots, T_j)$, $1 \leq j \leq i$,
\item $k(T_1, \ldots, T_i) := \sum_{j=1}^{i} s_j(T_1, \ldots, T_j)$ is independent of any permutation of $(T_1, \ldots, T_i)$ and
\item $\eta(\cdot)$ is a non-negative real-valued function for which $\eta(j) > 0$ for $j=1,2, \ldots$. 
\end{enumerate}
\end{condition}
These restrictions on the functions $s_j(\cdot)$, $k(\cdot)$ and $\eta(\cdot)$ have the following implications. First, by the restriction $s_j(\cdot)$, the departure rate of an active customer may depend on the types of the active customers ahead of it, but not on those behind. Note that $s_j(\cdot)$ may equal zero, so that it is possible for active customers to still receive no service. Second, $k(\cdot)$ is defined such that the total departure rate of customers from the system is the same for any permutation of the active customers. Finally, the function $\eta(\cdot)$ allows the departure rate of customers to depend on the total number of customers present in the system, but at the same time the departure rate is indifferent to the types of the inactive customers. Next, based on the definition of $\mu(\st)$, we conclude that 
\begin{equation}\label{eq:mu}
\mu(\st) = \eta(\phi(\st))k(T_1, \ldots, T_i).
\end{equation}
As mentioned earlier, this order-independent condition is reminiscent of the order-independent queue as introduced in \cite{Krzesinski11}. The difference, however, stems from the fact that we consider a different state descriptor, which captures a broader set of systems (cf.\ Section \ref{sec:OI}). It is also important to note that this condition allows our model to be more general than that of \cite{Visschers12}, as will become clear in Section \ref{sec:Visschers}. \newline

\emph{Further notation.\;} We conclude this section with notation needed to describe several important performance measures. At an arbitrary point in time, let $N$ denote the number of inactive customers in the system. More particularly, $N_j$ denotes the number of inactive customers in the central queue between the two customers that have claimed tokens $T_j$ and $T_{j+1}$. Thus, when the system is in state $\st = (T_1, n_1, \ldots, T_i, n_i)$, it holds that $N_j = n_j$ and $N = \sum_{j=1}^i n_j$. Moreover, the number of type-$c$ customers among these $N_j$ customers is denoted by $N_j^{(c)}$. As a consequence, the total number of inactive type-$c$ customers, denoted by $N^{(c)}$, satisfies $N^{(c)} = \sum_{j=1}^K N_j^{(c)}$. Using the same style of notation, $M$ denotes the total number of customers present in the system. Furthermore, for $1\le j \le i$, $M_j = N_j+1$ represents the number of customers in the `$j$-th' part of the system, where the added single customer is the one that holds token $T_j$. Of these $M_j$ customers, $M_j^{(c)}$ are of type $c$, so that $M^{(c)}$, the number of type-$c$ customers present in the system, satisfies $M^{(c)} = \sum_{c \in \mathcal{C}} M_j^{(c)}$. Next, we define the time-till-token of a customer to be the duration of the period between its arrival and the moment the customer claims of a token. Then,  the \emph{time-till-token} and the sojourn time of a type-$c$ customer is denoted by $W_c$ and $S_c$, respectively. Likewise, the quantities $W$ and $S$ refer to the time-till-token and the sojourn time of an arbitrary customer. 
Finally, the indicator function $\ind{A}$ on the event $A$ returns one if event $A$ is true, and zero otherwise.

\section{Product-form stationary distribution}\label{sec:statDist}
In this section, we derive the stationary distribution of the token-based central queue and find that it has a product form.
In doing so, we use techniques from \cite{Visschers12}. We describe the transition rates and the global balance equations pertaining to this process in Section \ref{sec:transitionRatesBalanceEquations}. Then, we proceed to derive the product-form stationary distribution in Section \ref{sec:productFormStatDist}. Section \ref{sec:L} subsequently points out how this stationary distribution can be computed efficiently for models with indistinguishable tokens by aggregation of states. We conclude with a note on the stability conditions of the token-based central queue in Section \ref{sec:stability}.

\subsection{Transition rates and balance equations}\label{sec:transitionRatesBalanceEquations}
The transitions associated with the model are organised into the following three categories. The first category contains transitions that are caused by the arrival of a customer. The second and third categories pertain to transitions due to a departure of a customer (i.e.\ a completion of service). In particular, the second category contains departure transitions where a token becomes available. The third category contains the remaining departure transitions, where the released token is immediately reclaimed by an inactive customer. We now proceed to describe the transition rates within each of these categories. 

\subsubsection{Arrival transitions}\label{sec:arrivalRates}
Recall from Section \ref{sec:model} that an arriving customer either joins the central queue as an inactive customer (when it finds no compatible tokens in the set $\tavail$) or joins it as an \textit{active customer}. In a given state $\st = (T_1, n_1, \ldots, T_i, n_i)$, the arrival rate of inactive customers is $\lambda_{\mathcal{U}(\{{T}_1,\ldots,{T}_i\})}$. Hence, this is also the transition rate from state $\st$ to state $(T_1, n_1, \ldots, T_i, n_i+1)$. Customers that immediately claim a token $t$ upon arrival, $t\notin\{T_1, \ldots, T_i\}$, arrive at rate $\lambda_t(\{T_1, \ldots, T_i\})$. Therefore, the transition rate from state $\st$ to state $(T_1, n_1, \ldots, T_i, n_i, t)$ is given by $\lambda_t(\{T_1, \ldots, T_i\})$. Recall that these rates satisfy Condition \ref{cdn:assignment}. By virtue of \eqref{eq:lambda}, we conclude that the total arrival rate into the system in any given state equals $\lambda$, as expected.

\subsubsection{Departure transitions where tokens become available}\label{sec:departureRatesRelease}
To describe the departure rates, additional notation is required. Transitions to a state $\st = (T_1, n_1, \ldots, T_i, n_i)$ due to a departure of a customer where a token $T$ is released to the set $\tavail$ are possible from states of the form
\begin{equation*}
\mbr{k}{n}{\st}{T} = (T_1, n_1, \ldots, T_k, n_k-n, T, n, T_{k+1}, n_{k+1}, \ldots, T_i, n_i),
\end{equation*}
where $ k \in \{0, \ldots, i\}$, $n \in \{0, \ldots, n_k\}$ and $T \in \mathcal{T}\backslash\{T_1, \ldots, T_i\}$.
For future reference, it is worth noting that $\phi(\mbr{k}{n}{\st}{T}) = \phi(\st)+1$, as the population size of the two states only differ by the single active customer that releases token $T$. Furthermore, we have that 
\begin{align}
\mu_T(\mbr{k}{n}{\st}{T}) &= \eta(\phi(\st)+1) s_T(T_1, \ldots, T_k, T, T_{k+1}, \ldots, T_i) \notag\\
&=\eta(\phi(\st)+1)\left(k(T_1,\ldots,T_k, T) - k(T_1,\ldots,T_k)\right). 		\label{eq:releaseRate}
\end{align}
Note, however, that $\mu_T(\mbr{k}{n}{\st}{T})$ is merely the rate in state $\mbr{k}{n}{\st}{T}$ at which the customer holding $T$ leaves the system. To obtain the transition rate from state $\mbr{k}{n}{\st}{T}$ to $\st$, this quantity must be multiplied with the probability that after this departure, the token $T$ is indeed released from activity. This probability is given by $\rprob{k}{n}{\st}{T} = \beta_k(T)^n\beta_{k+1}(T)^{n_{k+1}}\cdots\beta_i(T)^{n_i}$, where
\begin{equation}\label{eq:beta}
\beta_k(T) = \frac{\lambda_{\mathcal{U}(\{T_1, \ldots, T_k\})}}{\lambda_{\mathcal{U}(\{T_1, \ldots, T_k, T\})}}
\end{equation}
is the probability that a customer waiting in the $k$-th portion of the central queue can not be served by token $T$. As a special case, we define $\beta_0(T) = 0$ for any token $T\in \mathcal{T}$. It now follows that the transition rate from $\mbr{k}{n}{\st}{T}$ to $\st$ is given by $\mu_T(\mbr{k}{n}{\st}{T})\rprob{k}{n}{\st}{T}$.

\subsubsection{Departure transitions where tokens are reassigned}\label{sec:departureRatesReassign}
We proceed to consider the departure transitions, where a token is immediately reclaimed by another customer waiting further down the central queue. Transitions of this type to state $\st = (T_1, n_1, \ldots, T_i, n_i)$ are possible from states of the form
\begin{equation*}
\mbs{k}{n}{\st}{T_j} = (T_1, n_1, \ldots, T_k, n_k-n, T_j, n, T_{k+1}, n_{k+1}, \ldots, T_{j-1}, n_{j-1}+1+n_j, T_{j+1}, n_{j+1}, \ldots, T_i, n_i),
\end{equation*}
where $1 \le k \le i$, $k+1 < j \le i$ and $n \in \{0, \ldots, n_k\}$. These transitions describe the event that the customer which holds $T_j$ departs the system, and token $T_j$ is subsequently reclaimed by an inactive customer between the customers holding $T_{j-1}$ and $T_{j+1}$. Again, $\phi(\mbs{k}{n}{\st}{T}) = \phi(\st)+1$ and 
\begin{align}
\mu_T(\mbs{k}{n}{\st}{T_j}) &= \eta(\phi(\st)+1) s_{T_j}(T_1, \ldots, T_k, T_j, T_{k+1}, \ldots, T_i)\notag\\
&=\eta(\phi(\st)+1)\left(k(T_1, \ldots, T_k, T_j) - k(T_1,\ldots,T_k)\right). 		\label{eq:shiftRate}
\end{align}
Similar to the previous case, the transition rate from a state $\mbs{k}{n}{\st}{T_j}$ to state $\st$ can be argued to be equal to $\mu_T(\mbs{k}{n}{\st}{T_j})\sprob{k}{n}{\st}{T_j}$, where the latter factor is given by
\begin{equation*}
\sprob{k}{n}{\st}{T_j} = \beta_k(T_j)^n\beta_{k+1}(T_j)^{n_{k+1}}\cdots\beta_{j-1}(T_j)^{n_{j-1}}(1-\beta_{j-1}(T_j)),
\end{equation*}
with $\beta_k(T_j)$ as defined in \eqref{eq:beta}.

\subsubsection{Global balance equations}
Now that all transitions rates have been described, the global balance equations can be obtained. Using results from Sections \ref{sec:arrivalRates}-\ref{sec:departureRatesReassign}, denoting the stationary distribution by $\{\pi(\st): \st\in\stsp\}$ and recalling that the total departure rate from a state $\st$ is simply $\mu(\st)$, the global balance equations are, for $\st=(T_1, n_1, \ldots, T_i, n_i)\in\stsp\backslash\{(0)\}$, given by
\begin{align}
 (\la+\mu(\st))\pi(\st) =\;& \ind{n_i>0}\la_{\mathcal{U}(\{T_1, \ldots, T_i\})}\pi(T_1, n_1, \ldots, T_i, n_i-1) \notag \\
&+\ind{n_i=0}\la_{T_i}(\{T_1, \ldots, T_{i-1}\})\pi(T_1, n_1, \ldots, T_{i-1}, n_{i-1}) \notag\\
&+ \sum_{T \in \mathcal{T}\backslash\{T_1, \ldots, T_i\}}\sum_{k=0}^i\sum_{n=0}^{n_k} \mu_T(\mbr{k}{n}{\st}{T})\rprob{k}{n}{\st}{T}\pi(\mbr{k}{n}{\st}{T}) \notag \\
&+ \sum_{j=1}^i\sum_{k=0}^{j-1}\sum_{n=0}^{n_k} \mu_{T_j}(\mbs{k}{n}{\st}{T_j})\sprob{k}{n}{\st}{T_j}\pi(\mbs{k}{n}{\st}{T_j}). \label{eq:globalBalanceEquations}
\end{align}
For $\st = (0)$, however, several of these terms can be omitted, so that
\begin{equation}\label{eq:globalBalanceEquationsEmptyState}
\lambda \pi((0)) = \sum_{T \in \mathcal{T}} \mu_T((0, T))\pi((0,T)).
\end{equation}

%

\subsection{Product-form stationary distribution}\label{sec:productFormStatDist}
We now present one of the main contributions of this paper. When both the assignment condition and the order-independent condition (cf.\ Conditions \ref{cdn:assignment} and \ref{cdn:OI}) are satisfied, the token-based central queue together with its state descriptor allows for a product-form stationary distribution. This distribution is given in the following theorem.
\begin{theorem}
\label{thm:statDist}
If the token-based central queue is stable and Conditions \ref{cdn:assignment} and \ref{cdn:OI} are satisfied, then, for each $\st = (T_1, n_1, \ldots, T_i, n_i) \in \stsp$, the stationary distribution is given by 
\begin{equation} \label{eq:statDist}
 \pi(\st) = \pi((0))\frac{\Pi_{\lambda}(\{{T}_1, \ldots, {T}_i\})}{\Pi_{k}({T}_1,\ldots,{T}_i)}\prod_{j=1}^i{\alpha_j}^{n_j} \prod_{j=1}^{\phi(\st)}\frac{1}{\eta(j)},
\end{equation}
where
\begin{equation*}
\Pi_{\lambda}(\{{T}_1, \ldots, {T}_i\}) = \prod_{j=1}^{i} \lambda_{{T}_j}(\{{T}_1, \ldots, {T}_{j-1} \}), \Pi_{k}({T}_1,\ldots,{T}_i) = \prod_{j=1}^{i} k({T}_1, \ldots, {T}_{j})\mbox{ and }\alpha_j = \frac{\lambda_{\mathcal{U}(\{{T}_1,\ldots,{T}_j\})}}{k({T}_1, \ldots {T}_{j})}.
\end{equation*}
The normalising constant $\pi((0))$ is given by 
\begin{equation}\label{eq:piZero}
\pi((0)) = \left(1+\sum_{i=1}^K \sum_{(T_1, \ldots, T_i)\in \mathcal{T}^{i}}\frac{\Pi_{\lambda}(\{{T}_1, \ldots, {T}_i\})}{\Pi_{k}({T}_1,\ldots,{T}_i)}\sum_{(n_1, \ldots, n_i)\in \mathbb{N}^i}\prod_{j=1}^i{\alpha_j}^{n_j} \prod_{j=1}^{i+\sum_{k=1}^i n_k}\frac{1}{\eta(j)}\right)^{-1},
\end{equation}
where $\mathcal{T}^{i}$ denotes the set of all possible combinations of $i$  tokens from the set $\mathcal{T}$.
\end{theorem}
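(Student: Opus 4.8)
The plan is to verify directly that the proposed expression \eqref{eq:statDist} satisfies the global balance equations \eqref{eq:globalBalanceEquations} and \eqref{eq:globalBalanceEquationsEmptyState}; since the Markov process is irreducible on $\stsp$ (under stability) and the stated family sums to one after division by $\pi((0))$, uniqueness of the stationary distribution then gives the result. I would first record the key algebraic identities that make the ansatz work. The assignment condition (Condition~\ref{cdn:assignment}) guarantees that $\Pi_\lambda(\{T_1,\ldots,T_i\})$ depends only on the \emph{set} $\{T_1,\ldots,T_i\}$, not on its ordering; the permutation-invariance of $k(\cdot)$ in Condition~\ref{cdn:OI} does the same for $\Pi_k(T_1,\ldots,T_i)$ and for the $\alpha_j$'s. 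This is what makes the right-hand side of \eqref{eq:statDist} well defined and, crucially, what will let the incoming ``release'' and ``reassign'' transitions telescope. I would also note the two ratio identities that will be used repeatedly: from \eqref{eq:releaseRate} and the definition of $k(\cdot)$, the departure-rate factors can be written as differences $k(T_1,\ldots,T_k,T)-k(T_1,\ldots,T_k)$, and from \eqref{eq:beta} the $\beta$-factors are ratios of $\lambda_{\mathcal{U}(\cdot)}$ terms; combined with $\alpha_j=\lambda_{\mathcal{U}(\{T_1,\ldots,T_j\})}/k(T_1,\ldots,T_j)$, this means every transition rate, when multiplied by the corresponding $\pi$-value, produces a common prefactor times elementary expressions in the $\lambda_{\mathcal{U}}$'s, $k(\cdot)$'s and $\alpha_j$'s.

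Next I would substitute \eqref{eq:statDist} into \eqref{eq:globalBalanceEquations}, dividing through by $\pi(\st)$ so that the equation becomes a purely algebraic identity in the parameters. The left-hand side contributes $\lambda + \mu(\st) = \lambda + \eta(\phi(\st))k(T_1,\ldots,T_i)$ by \eqref{eq:mu}. On the right-hand side, the two arrival terms are immediate: the ratio $\pi(T_1,\ldots,T_i,n_i-1)/\pi(\st)$ equals $\eta(\phi(\st))/\alpha_i$ (when $n_i>0$), and $\pi(T_1,\ldots,T_{i-1},n_{i-1})/\pi(\st)$ equals $\eta(\phi(\st))\,k(T_1,\ldots,T_i)/\lambda_{T_i}(\{T_1,\ldots,T_{i-1}\})$ (when $n_i=0$), so after multiplying by the respective arrival rates one gets $\eta(\phi(\st))\lambda_{\mathcal{U}(\{T_1,\ldots,T_i\})}/\alpha_i = \eta(\phi(\st))k(T_1,\ldots,T_i)$ in the first case, matching the $\mu(\st)$ term, and $\eta(\phi(\st))k(T_1,\ldots,T_i)$ in the second case as well. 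The plan is then to show that the doubly/triply indexed ``release'' sum and the ``reassign'' sum together telescope to exactly $\lambda\cdot\pi(\st)$ — i.e. to $\lambda$ after normalisation — which will close the equation. This is where the bulk of the work lies.

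The main obstacle is precisely this telescoping of the departure sums. For a fixed released token $T\notin\{T_1,\ldots,T_i\}$, the inner sum over $n\in\{0,\ldots,n_k\}$ of the geometric factors $\beta_k(T)^n$ collapses, and the outer sum over the insertion position $k\in\{0,\ldots,i\}$ telescopes because consecutive $\beta_k(T)$'s share the numerator/denominator pattern in \eqref{eq:beta}; the net effect of summing over $k$ and $n$ for that $T$ should be $\eta(\phi(\st))\bigl(k(T_1,\ldots,T_i,T)-k(T_1,\ldots,T_i)\bigr)$ — one would check that the $\pi$-ratio $\pi(\mbr{k}{n}{\st}{T})/\pi(\st)$ supplies exactly the missing factors $\Pi_\lambda$, $\Pi_k$, the $\alpha$'s and the $1/\eta$ term (recall $\phi$ increases by one), and that the permutation-invariance of $k(\cdot)$ lets one relocate $T$ to the end. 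Summing the result over all $T\notin\{T_1,\ldots,T_i\}$ gives $\eta(\phi(\st))\sum_T\bigl(k(T_1,\ldots,T_i,T)-k(T_1,\ldots,T_i)\bigr)$, and by \eqref{eq:lambda} together with the telescoping of $\lambda_{\mathcal{U}}$-differences this should reduce to $\lambda - \lambda_{\mathcal{U}(\{T_1,\ldots,T_i\})}$, i.e. the ``immediate-service'' arrival rate; the ``reassign'' sum analogously accounts for the remainder $\lambda_{\mathcal{U}(\{T_1,\ldots,T_i\})}$. Care is needed with the boundary conventions $\beta_0(T)=0$ and with the cases $n_i=0$ versus $n_i>0$, since these determine which of the two arrival terms is present. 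Finally, I would check \eqref{eq:globalBalanceEquationsEmptyState} separately — it is the degenerate case $i=0$ of the above computation — and confirm that \eqref{eq:piZero} is simply the statement $\sum_{\st\in\stsp}\pi(\st)=1$ with the geometric-type series over the $n_j$'s written out; convergence of that series is exactly the stability assumption, so no extra argument is required there.
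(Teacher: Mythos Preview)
Your overall strategy---substitute \eqref{eq:statDist} into the global balance equations, match the arrival terms against $\mu(\st)$, and show that the release and reassign sums account for $\lambda-\lambda_{\mathcal{U}(\{T_1,\ldots,T_i\})}$ and $\lambda_{\mathcal{U}(\{T_1,\ldots,T_i\})}$ respectively---is exactly what the paper does (it formalises this as the three partial balance equations \eqref{eq:balanceEquationArrival}--\eqref{eq:balanceEquationDepartureShift}). However, the concrete computation you sketch for the release sum is wrong in two related places.

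First, you assert that $\Pi_k(T_1,\ldots,T_i)$ is permutation-invariant. It is not: $\Pi_k$ is a product of \emph{partial} evaluations $k(T_1,\ldots,T_j)$, and only the full $k(T_1,\ldots,T_i)$ is symmetric by Condition~\ref{cdn:OI}. This matters because the state $\mbr{k}{n}{\st}{T}$ has tokens ordered as $(T_1,\ldots,T_k,T,T_{k+1},\ldots,T_i)$, so the ratio $\Pi_k(\mbr{k}{n}{\st}{T})/\Pi_k(\st)$ is not simply $k(T_1,\ldots,T_i,T)$ but carries the extra factor $\prod_{j=k+1}^{i}\frac{k(T_1,\ldots,T_j,T)}{k(T_1,\ldots,T_j)}$; you cannot ``relocate $T$ to the end'' at the level of $\Pi_k$. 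Second, and as a consequence, your claim that the inner sum over $k,n$ for a fixed $T$ reduces (after dividing by $\pi(\st)$) to $\eta(\phi(\st))\bigl(k(T_1,\ldots,T_i,T)-k(T_1,\ldots,T_i)\bigr)$ is incorrect, and the subsequent appeal to \eqref{eq:lambda} to turn a sum of $k$-differences into $\lambda-\lambda_{\mathcal{U}}$ is a non-sequitur: \eqref{eq:lambda} relates the $\lambda_t(\cdot)$'s, not the $k(\cdot)$'s. What actually happens is that the $\Pi_\lambda$ ratio contributes the factor $\lambda_T(\{T_1,\ldots,T_i\})$ (this uses Condition~\ref{cdn:assignment}), while the $\mu_T$ rate, the $r$-probabilities and the order-dependent $\Pi_k$ ratio combine into a telescoping sum in the $k(\cdot)$-ratios that collapses to $1$ (this is the identity the paper records as \eqref{eq:kTermEqualsOne}). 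Thus the release sum for each $T$ equals $\lambda_T(\{T_1,\ldots,T_i\})\pi(\st)$, and \emph{then} summing over $T$ and invoking \eqref{eq:lambda} gives $\lambda-\lambda_{\mathcal{U}(\{T_1,\ldots,T_i\})}$. Your endpoint is right but the mechanism you describe would not get you there.
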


\begin{proof}
This proof verifies that \eqref{eq:statDist} satisfies the global balance equations \eqref{eq:globalBalanceEquations} and \eqref{eq:globalBalanceEquationsEmptyState}, which guarantees that \eqref{eq:statDist} represents the unique stationary distribution. It is straightforward to show that \eqref{eq:statDist} satisfies \eqref{eq:globalBalanceEquationsEmptyState}. To see that \eqref{eq:statDist} satisfies \eqref{eq:globalBalanceEquations}, we show in Appendix \ref{app:thm:statDist} that \eqref{eq:statDist} satisfies the following three equations for every $\st \in \stsp$ and $T\in\mathcal{T}\backslash\{T_1, \ldots, T_i\}$:
\begin{align}\label{eq:balanceEquationArrival}
 \mu(\st)\pi(\st) =\;& \ind{n_i>0}\la_{\mathcal{U}(\{T_1, \ldots, T_i\})}\pi((T_1, n_1, \ldots, T_i, n_i-1)) \notag \\
&+\ind{n_i=0}\la_{T_i}(\{T_1, \ldots, T_{i-1})\pi(T_1, n_1, \ldots, T_{i-1}, n_{i-1}),
\end{align}
\begin{equation}\label{eq:balanceEquationDepartureRelease}
\la_{T}(\{T_1, \ldots, T_{i}\})\pi(\st) =\sum_{k=0}^i\sum_{n=0}^{n_k} \mu_T(\mbr{k}{n}{\st}{T})\rprob{k}{n}{\st}{T}\pi(\mbr{k}{n}{\st}{T})
\end{equation}
and
\begin{equation}\label{eq:balanceEquationDepartureShift}
\la_{\mathcal{U}(\{T_1, \ldots, T_i\})}\pi(\st) = \sum_{j=1}^i\sum_{k=0}^{j-1}\sum_{n=0}^{n_k} \mu_{T_j}(\mbs{k}{n}{\st}{T_j})\sprob{k}{n}{\st}{T_j}\pi(\mbs{k}{n}{\st}{T_j}).
\end{equation}
Summing \eqref{eq:balanceEquationDepartureRelease} over all available tokens $T \in \mathcal{T}\backslash\{T_1, \ldots, T_i\}$ and adding \eqref{eq:balanceEquationArrival} and \eqref{eq:balanceEquationDepartureShift}, we conclude using \eqref{eq:lambda} that \eqref{eq:statDist} satisfies \eqref{eq:globalBalanceEquations}. The theorem now follows. 
\end{proof}

\begin{remark}\label{rem:closedFormPiZero}
Note that the expression for the stationary distribution in \eqref{eq:statDist} is not in closed form. This is due to the fact that the normalising constant $\pi((0))$ contains infinite sums. For some specific cases of the function $\eta(\cdot)$, though, given that the token set is finite, $\pi((0))$ allows for a closed-form expression. For example, when $\eta(\cdot)=1$, \eqref{eq:piZero} reduces to
\begin{equation}\label{eq:piZeroSimple}
\pi((0)) =\left(1+\sum_{i=1}^K \sum_{(T_1, \ldots, T_i)\in \mathcal{T}^i}\frac{\Pi_{\lambda}(\{{T}_1, \ldots, {T}_i\})}{\Pi_{k}({T}_1,\ldots,{T}_i)}\prod_{j=1}^i \frac{1}{1-\alpha_j}\right)^{-1},
\end{equation}
which is in closed form. We will see in Section \ref{sec:applications} that $\eta(\cdot)$ is a constant function in many applications.
\end{remark}

\begin{remark}
While we assume our model to satisfy Condition \ref{cdn:assignment}, a different assignment mechanism has been studied in \cite{AdanWeiss} called ALIS: `Assign Longest Idle Server'. Stated in the context of the token-based central queue, the key feature of an ALIS queue is that an arriving customer who finds multiple eligible tokens upon arrival, will activate the token that has been available the longest. Since this mechanism cannot be captured by an assignment rule as described in Section \ref{sec:model}, we must extend the state descriptor to keep track of which token has been available the longest, in order to regard this mechanism. The new state descriptor is of the form $(T_1, n_1, \ldots, T_i, n_i, T_{i+1}, \ldots, T_K)$, where $T_{i+1}, \ldots, T_{K}$ are the available tokens in ascending order of the time they have been idle. In other words, if an arriving customer is eligible to claim token $T_K$, it will do so. Otherwise, it will claim $T_{K-1}$ if it is able to do so, and so on. By using the same proof techniques as in \cite{AdanWeiss}, we expect it can be shown that the stationary distribution for the token-based central queue with an ALIS mechanism also has a product form.
\end{remark}

\subsection{Aggregation of states for indistinguishable tokens}\label{sec:L}
When the model contains tokens which are indistinguishable from one another, computation of the stationary distribution in Theorem \ref{thm:statDist}, and especially its normalising constant in \eqref{eq:piZero} can be made more efficient. To define the notion of indistinguishability, we write the token set $\mathcal{T}$ as a union of disjoint token sets $\tilde{\mathcal{T}}_1, \ldots,\tilde{\mathcal{T}}_l$, where it holds for any two tokens $s,t\in \tilde{\mathcal{T}}_i$, $i \in \{1, \ldots, l\}$ that $\mathcal{C}_s = \mathcal{C}_t$, $\lambda_s({T_1, \ldots, T_j}) = \lambda_t({T_1, \ldots, T_j})$, $\lambda_{T_j}(\{T_1, \ldots, T_{j-1}, s\}) = \lambda_{T_j}(\{T_1, \ldots, T_{j-1}, t\})$ and $k(T_1, \ldots, T_j, s) = k(T_1, \ldots, T_j, t)$ for $T_1, \ldots, T_j \in\mathcal{T}\backslash\{s,t\}$. We then call tokens which belong to the same token set $\tilde{\mathcal{T}}_i$ indistinguishable from one another.

If the number of disjoint token sets $l$ is much smaller than $|\mathcal{T}|$, the computational burden of Theorem \ref{thm:statDist} can be relieved by state aggregation. To this end, let us say that a token $t$ has a token label $l_k$ whenever $t \in \tilde{\mathcal{T}}_k$, $k \in \{1, \ldots, k\}$. Then, since any two tokens $s$ and $t$ from the set $\tilde{\mathcal{T}}_k$ are indistinguishable, we may as well address both of them by their token label $l_k$. This leads to the state descriptor of the form $\st^{(L)} = (L_1, n_1, \ldots, L_i, n_i)$, where $L_i$ now represents the label of the token which is held by the $i$-th active customer in the system, but not the identity of the actual token. We denote the state space under this state descriptor with $\stsp^{(L)}$.  
Let $l(t)$ denote the label of token $t\in \mathcal{T}$, i.e. $l(t) = l_j$ if $t \in \tilde{\mathcal{T}}_j$. Then, by aggregation of states, one can derive the following stationary distribution for the aggregated state descriptor from \eqref{eq:statDist}:
\begin{align}
\pi((L_1, n_1, \ldots, L_i, n_i)) &= \sum_{(T_1, \ldots, T_i)\in\mathcal{T}^i: l(T_j) = L_j \forall j \in\{1, \ldots, i\}} \pi((T_1, n_1, \ldots, T_i, n_i)) \notag \\
&= \pi((0))\prod_{j=1}^i \frac{\lambda_{L_j}(\{L_1, \ldots, L_j\})}{k(L_1, \ldots, L_j)}\prod_{j=1}^i \left(\frac{\lambda_{\mathcal{U}(\{L_1, \ldots, L_j\})}}{k(L_1, \ldots, L_j)}\right)^{n_j} \prod_{j=1}^{\phi(\st)}\frac{1}{\eta(j)}, \label{eq:statdistL}
\end{align}
where $\lambda_{L_j}(\{L_1, \ldots, L_{j-1}\}) = \sum_{t\in\mathcal{T}:l(t) = L_j}\lambda_{t}(T_1, \ldots, T_{j-1})$ (with $l(T_1) = L_1, l(T_2) = L_2, \ldots$) represents the arrival rate of customers that immediately claim a token with label $L_j$, when there are $j-1$ active customers that have claimed tokens from labels $L_1, \ldots, L_{j-1}$.
Likewise, when considering tokens $T_1, \ldots, T_i$ such that $L_j = l(T_j)$ for $j \in \{1, \ldots, i\}$, we define $k(L_1, \ldots, L_j)=k(T_1, \ldots, T_j)$ and $\mathcal{U}(\{L_1, \ldots, L_j\})$ represents the same set of customer types as $\mathcal{U}(\{T_1, \ldots, T_j\})$. The normalising constant $\pi((0))$ as given in \eqref{eq:piZero} remains unchanged, but can now alternatively be written as
\begin{equation*}
\pi((0)) = \left(1+\sum_{i=1}^K \sum_{(L_1, \ldots, L_i)\in \mathcal{L}^{i}}\prod_{j=1}^i \frac{\lambda_{L_j}(\{L_1, \ldots, L_j\})}{k(L_1, \ldots, L_j)}\prod_{j=1}^i \left(\frac{\lambda_{\mathcal{U}(\{L_1, \ldots, L_j\})}}{k(L_1, \ldots, L_j)}\right)^{n_j} \prod_{j=1}^{\phi(\st)}\frac{1}{\eta(j)}\right)^{-1},
\end{equation*}
where $\mathcal{L}^{i}$ represents any possible combination of $i$ token labels. In the sequel, when working with the aggregated state descriptor, we will use $\mu_{L_j}(\st^{(L)})$ and $s_j(L_1, \ldots, L_j)$ as notation for the equivalents of $\mu_{T_j}(\st)$ and $s_j(T_1, \ldots, T_j)$.

\subsection{Stability}\label{sec:stability}
From the stationary distribution \eqref{eq:statDist}, conditions for stability can be derived. In particular, in case the function $\eta(\cdot)$ has a limit $\eta := \lim_{j \rightarrow \infty} \eta(j)$, the system will be stable if $\frac{\lambda_{\mathcal{U}(\{T_1, \ldots, T_i\})}}{k(T_1, \ldots, T_i)} < \eta$ for each $i\in \{1, \ldots, K\}$ and $\{T_1, \ldots, T_i\} \subset T$. Under this condition, Equation \eqref{eq:statDist} constitutes a non-null and convergent solution of the equilibrium equations of the irreducible Markov process underlying the model. As such, it is implied by \cite[Theorem 1]{Foster} that the Markov is ergodic, leading to stability.
When $\frac{\lambda_{\mathcal{U}(\{T_1, \ldots, T_i\})}}{k(T_1, \ldots, T_i)} > \eta$ for some $i\in \{1, \ldots, K\}$ and $\{T_1, \ldots, T_i\} \subset T$, we have by \eqref{eq:piZero} that $\pi((0)) = 0$, implying that the expected return time to state (0) is infinite. As such, the Markov process is not ergodic and the token-based central queue is unstable. 
Finally, in case $\max \frac{\lambda_{\mathcal{U}(\{T_1, \ldots, T_i\})}}{k(T_1, \ldots, T_i)} = \eta$, the questions whether or not there is ergodicity depends on the way (and possibly the speed at which) the function $\eta(\cdot)$ converges to its limit $\eta$.

\section{Performance analysis}\label{sec:perfEval}
Now that we have derived the (product-form) stationary distribution in Section \ref{sec:statDist}, we study several performance measures of the token-based central queue. In particular, we study the (per-type) number of inactive customers in Section \ref{sec:numberCustWaiting}. Likewise, we study the (per-type) number of customers present in the system in Section \ref{sec:numberCustPresent}. Then, making use of the distributional form of Little's law (cf.~\cite{KeilsonServi}), we obtain results for the time-till-token $W_c$ of type-$c$ customers (i.e., the time it takes for customers to claim a token) in Section \ref{sec:waitingTime}. As we will see in Section \ref{sec:applications}, $W_c$ coincides with the waiting time of type-$c$ customers in many applications of our model. Finally, we also consider the sojourn time of customers in Section \ref{sec:sojournTime}.

\subsection{Number of inactive customers}\label{sec:numberCustWaiting}
This section considers the number of inactive customers in the system. For applications where the time-till-token represents the waiting time, this number coincides with the number of customers in the system waiting for service. The main theorem of this section concerns the probability generating function (PGF) of $N^{(c)}$, the number of type-$c$ customers that are inactive.
\begin{theorem}\label{thm:jointPGFNumberCustWaitingClass}
Let $\theta_{c,j} := \frac{\lambda_c\ind{c \in \mathcal{U}(T_1, \ldots, T_j)}}{\lambda_{\mathcal{U}({T_1, \ldots, T_j}})}$ for $j \in \{1, \ldots, K\}$ and $c \in \mathcal{C}$. Then, the joint PGF of $\{N^{(c)}: c \in \mathcal{C}\}$ is, for $z_{c} \in \{\bar{c}\in\CC:|\bar{c}|<1\}$, given by
\begin{align}
\E{\prod_{c \in \mathcal{C}}z_c^{N^{(c)}}}&=\sum_{i=0}^K \sum_{(T_1, \ldots, T_i)\in \mathcal{T}^{i}} \pi((0))\frac{\prodlambda}{\prodk}  \times\notag\\
&\qquad\qquad\qquad\times\prod_{j=1}^i\frac 1{\eta(j)}\sum_{\{n_1, \ldots, n_i\} \in \NN_0^i}\prod_{j=1}^{\sum_{k=1}^i n_k} \frac 1{\eta(i+j)} \prod_{j=1}^i (\al_j\sum_{c \in \mathcal{C}}\theta_{c,j}z_{c})^{n_j}, \label{eq:jointPGFNcUnconditional}
\end{align}
\end{theorem}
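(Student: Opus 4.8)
The plan is to undo the aggregation over customer types that is built into the descriptor $\st$ by passing to a finer Markov chain. Let this chain record, on top of the tokens $T_1,\ldots,T_i$ and the block sizes $n_1,\ldots,n_i$, the \emph{types} of the inactive customers in each block, listed in order of arrival; a generic state is $\tilde{\st}=(T_1,c_{1,1},\ldots,c_{1,n_1},T_2,c_{2,1},\ldots,T_i,c_{i,1},\ldots,c_{i,n_i})$ with $c_{j,m}\in\mathcal{U}(\{T_1,\ldots,T_j\})$. This descriptor is again Markovian (the order is needed because a released token is reclaimed by the oldest compatible inactive customer). I would claim that its stationary distribution is obtained from \eqref{eq:statDist} by splitting each factor $\al_j^{n_j}$ over the individual customers of block $j$:
\[
\tilde{\pi}(\tilde{\st})=\pi((0))\,\frac{\prodlambda}{\prodk}\left(\prod_{j=1}^{i}\prod_{m=1}^{n_j}\frac{\la_{c_{j,m}}}{k(T_1,\ldots,T_j)}\right)\prod_{\ell=1}^{\phi(\st)}\frac{1}{\eta(\ell)}.
\]
This is consistent with Theorem~\ref{thm:statDist}: since $\sum_{c\in\mathcal{U}(\{T_1,\ldots,T_j\})}\la_c=\la_{\mathcal{U}(\{T_1,\ldots,T_j\})}$, summing $\tilde{\pi}(\tilde{\st})$ over all type assignments projecting to a fixed $\st$ returns $\pi(\st)$. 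I would prove it rigorously the same way Theorem~\ref{thm:statDist} is proved, namely by splitting the balance equations of the refined chain into arrival, token-release and token-reassignment parts, in direct analogy with \eqref{eq:balanceEquationArrival}--\eqref{eq:balanceEquationDepartureShift}.

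Granting the refined product form, dividing $\tilde{\pi}(\tilde{\st})$ by $\pi(\st)$ from \eqref{eq:statDist} shows that, conditionally on the macro-state $\st=(T_1,n_1,\ldots,T_i,n_i)$, the types of the inactive customers are mutually independent, the customer at position $m$ of block $j$ being of type $c$ with probability $\la_c/\la_{\mathcal{U}(\{T_1,\ldots,T_j\})}=\theta_{c,j}$ (and $0$ for $c\notin\mathcal{U}(\{T_1,\ldots,T_j\})$). Since $\sum_{c\in\mathcal{C}}\theta_{c,j}=1$, the block-$j$ counts $(N_j^{(c)})_{c\in\mathcal{C}}$ are independent (across $j$) multinomial vectors, so with $N^{(c)}=\sum_{j=1}^{i}N_j^{(c)}$,
\[
\E{\prod_{c\in\mathcal{C}}z_c^{N^{(c)}}\mid\st}=\prod_{j=1}^{i}\left(\sum_{c\in\mathcal{C}}\theta_{c,j}z_c\right)^{n_j}.
\]
Averaging over $\st\sim\pi$, using $\pi(\st)$ from \eqref{eq:statDist}, splitting $\prod_{\ell=1}^{\phi(\st)}\eta(\ell)^{-1}=\big(\prod_{j=1}^{i}\eta(j)^{-1}\big)\big(\prod_{j=1}^{\sum_{k=1}^{i}n_k}\eta(i+j)^{-1}\big)$ and merging $\al_j^{n_j}\big(\sum_{c}\theta_{c,j}z_c\big)^{n_j}=\big(\al_j\sum_{c}\theta_{c,j}z_c\big)^{n_j}$, the sum over $\st\in\stsp$ — the empty state contributing the $i=0$ term $\pi((0))$ — regroups into exactly the right-hand side of \eqref{eq:jointPGFNcUnconditional}. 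The interchange of summations is legitimate because $\big|\sum_{c}\theta_{c,j}z_c\big|<1$ whenever $|z_c|<1$, so the multiple series is dominated termwise by the convergent series defining $\pi((0))^{-1}$.

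The one genuinely non-trivial step is the refined product form of the first paragraph; everything after it is bookkeeping. Arrival and token-release transitions carry over verbatim from the proof of Theorem~\ref{thm:statDist}. The delicate case is a token reassignment: the released token $T_j$ is handed to the oldest compatible inactive customer, an event that in the aggregated chain is weighted by the skipping probabilities $\beta_k(T)$ of \eqref{eq:beta}, which in the refined chain collapse to deterministic compatibility indicators; one must sum over the admissible position and type of the reclaiming customer and check that the weights recombine into the claimed product. Alternatively, one can sidestep the balance-equation computation with a thinning argument: marking each inactive arrival with an independent type drawn in proportion to $(\la_c)_{c}$ matches the Poisson per-type streams, and it then suffices to verify that reassignments preserve, within each compatibility class $\mathcal{U}(\{T_1,\ldots,T_j\})$, the proportional-to-$\la_c$ law of the waiting customers.
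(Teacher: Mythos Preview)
Your proposal is correct and follows essentially the same route as the paper: establish that, conditionally on the macro-state $\st=(T_1,n_1,\ldots,T_i,n_i)$, the per-block type counts $(N_j^{(c)})_{c\in\mathcal{C}}$ are independent multinomials with probabilities $(\theta_{c,j})_c$, so that $\E{\prod_c z_c^{N^{(c)}}\mid\st}=\prod_{j=1}^{i}(\sum_c\theta_{c,j}z_c)^{n_j}$, and then uncondition with \eqref{eq:statDist}. The only difference is one of rigour at the conditional-law step: the paper simply asserts the multinomial distribution ``by the dynamics of the arrival process'' and applies Newton's binomium, whereas you justify it by writing down a type-refined Markov chain and its product-form stationary law (or, alternatively, a thinning argument). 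Your refined product form is correct and does indeed project to \eqref{eq:statDist}; it makes explicit what the paper leaves implicit, but the core argument and the final bookkeeping are identical.
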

\begin{proof}
The proof extensively uses Theorem \ref{thm:statDist} and can be found in Appendix \ref{app:thm:jointPGFNumberCustWaitingClass}.
\end{proof}
An expression for $N$, the total number of inactive customers in the system, now follows from the fact that $\E{z^N} = \E{z^{\sum_{c \in \mathcal{C}} N^{(c)}}} = \E{\prod_{c \in \mathcal{C}}z^{N^{(c)}}}$ and $\sum_{c \in \mathcal{C}} \theta_{c,j} = \sum_{c \in \mathcal{C}} \frac{\lambda_c\ind{c \in \mathcal{U}(T_1, \ldots, T_j)}}{\lambda_{\mathcal{U}({T_1, \ldots, T_j}})}  = 1$.
\begin{corollary}\label{cor:jointPGFNumberCustWaitingGeneral}
The total number of inactive customers $N$ in the system satisfies, for $z \in \{\bar{c}\in\CC:|\bar{c}|<1\}$,
\begin{equation}
\E{z^N}=\sum_{i=0}^K \sum_{(T_1, \ldots, T_i)\in \mathcal{T}^{i}} \pi((0))\frac{\prodlambda}{\prodk} \prod_{j=1}^i\frac 1{\eta(j)}\sum_{\{n_1, \ldots, n_i\} \in \NN_0^i}\prod_{j=1}^{\sum_{k=1}^i n_k} \frac 1{\eta(i+j)} \prod_{j=1}^i (\al_jz)^{n_j}. \label{eq:jointPGFUnconditionalSimple}
\end{equation}
\end{corollary}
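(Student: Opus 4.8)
The plan is to obtain the corollary as an immediate specialization of Theorem~\ref{thm:jointPGFNumberCustWaitingClass}. First I would use that $N = \sum_{c \in \mathcal{C}} N^{(c)}$, so that substituting $z_c = z$ for every $c \in \mathcal{C}$ into the joint PGF gives $\E{\prod_{c \in \mathcal{C}} z^{N^{(c)}}} = \E{z^{\sum_{c \in \mathcal{C}} N^{(c)}}} = \E{z^N}$. Hence the left-hand side of \eqref{eq:jointPGFNcUnconditional} becomes exactly the quantity in the statement, and the substitution is admissible because $z \in \{\bar{c}\in\CC:|\bar{c}|<1\}$ lies in the domain of Theorem~\ref{thm:jointPGFNumberCustWaitingClass}.

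Next I would simplify the right-hand side of \eqref{eq:jointPGFNcUnconditional} under the same substitution. The variables $z_c$ appear there only through the factors $\bigl(\al_j \sum_{c \in \mathcal{C}} \theta_{c,j} z_c\bigr)^{n_j}$; setting $z_c = z$ turns these into $\bigl(\al_j z \sum_{c \in \mathcal{C}} \theta_{c,j}\bigr)^{n_j}$. It then remains to check the normalisation identity $\sum_{c \in \mathcal{C}} \theta_{c,j} = 1$ for each $j \in \{1, \ldots, K\}$, which follows directly from the definition $\theta_{c,j} = \lambda_c \ind{c \in \mathcal{U}(T_1, \ldots, T_j)} / \lambda_{\mathcal{U}(\{T_1,\ldots,T_j\})}$ together with $\lambda_{\mathcal{U}(\{T_1,\ldots,T_j\})} = \sum_{c \in \mathcal{U}(\{T_1,\ldots,T_j\})} \lambda_c = \sum_{c \in \mathcal{C}} \lambda_c \ind{c \in \mathcal{U}(T_1, \ldots, T_j)}$: summing the numerators over $c \in \mathcal{C}$ reproduces the denominator. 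Consequently each factor collapses to $(\al_j z)^{n_j}$, and the right-hand side becomes precisely \eqref{eq:jointPGFUnconditionalSimple}.

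I expect essentially no obstacle here, since this is a one-line consequence of Theorem~\ref{thm:jointPGFNumberCustWaitingClass}; the only substantive point is the identity $\sum_{c \in \mathcal{C}} \theta_{c,j} = 1$, which is already noted in the text preceding the corollary. The only mild care needed is that the term-by-term substitution into the (possibly infinite) sums over $(n_1, \ldots, n_i) \in \NN_0^i$ is legitimate, but this is guaranteed by absolute convergence for $|z| < 1$, which is inherited from the convergence of the series in Theorem~\ref{thm:statDist} (equivalently, from stability of the token-based central queue).
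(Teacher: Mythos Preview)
Your proposal is correct and matches the paper's own argument essentially verbatim: the paper derives the corollary by substituting $z_c=z$ into Theorem~\ref{thm:jointPGFNumberCustWaitingClass}, using $\E{z^N}=\E{\prod_{c\in\mathcal{C}} z^{N^{(c)}}}$ and the identity $\sum_{c\in\mathcal{C}}\theta_{c,j}=1$.
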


\subsection{Number of customers in the system}\label{sec:numberCustPresent}
We now study the number of customers present in the system, both per-type ($M^{(c)}$) and in general ($M$), by noting that these customers are comprised of inactive customers on one hand and active customers whose service is yet to be completed on the other hand. 

\begin{theorem}\label{thm:jointPGFnumberCustPresentClass}
Let $g_j$ be the type of the customer that holds token $T_j$ and define $G_{c_1, \ldots, c_i}(T_1, n_1, \ldots, T_i, n_i) := \Pro{\bigcap_{j \in \{1, \ldots, i\}} \{g_j = c_j\}\mid \st = (T_1, n_1, \ldots, T_i, n_i)}$. Then, the joint PGF of $\{M^{(c)}:c \in \mathcal{C}\}$, representing the per-class number of customers present in the system, is, for $z_c \in \{\bar{c}\in\mathcal{C}:|\bar{c}|<1$ given by
\begin{align}
\E{\prod_{c \in \mathcal{C}}z_c^{M^{(c)}}}&=\sum_{i=0}^K \sum_{(T_1, \ldots, T_i)\in \mathcal{T}^{i}} \pi((0))\frac{\prodlambda}{\prodk} \times\notag\\
&\qquad\times\prod_{j=1}^i\frac 1{\eta(j)}\sum_{\{n_1, \ldots, n_i\} \in \NN_0^i}\left(\sum_{\{c_1, \ldots, c_i\} \in \mathcal{C}^i} G_{c_1, \ldots c_i}(T_1, n_1 \ldots, T_i, n_i)\prod_{j=1}^i z_{c_j}\right) \times\notag\\
&\qquad\qquad\times\prod_{j=1}^{\sum_{k=1}^i n_k}\frac 1{\eta(i+j)} \prod_{j=1}^i (\al_j\sum_{c \in \mathcal{C}}\theta_{c,j}z_{c})^{n_j}. \label{eq:jointPGFMcUnconditional}
\end{align}
\end{theorem}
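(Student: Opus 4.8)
The plan is to mirror the proof of Theorem~\ref{thm:jointPGFNumberCustWaitingClass}, carrying along one additional factor that accounts for the types of the active customers. Decompose the per-class population as $M^{(c)}=N^{(c)}+A^{(c)}$, where $A^{(c)}$ denotes the number of active type-$c$ customers; in state $\st=(T_1,n_1,\ldots,T_i,n_i)$ one has $A^{(c)}=\sum_{j=1}^i\ind{g_j=c}$, so that $\prod_{c\in\mathcal{C}}z_c^{A^{(c)}}=\prod_{j=1}^i z_{g_j}$. Conditioning on $\st$ and using the tower property,
\[
\E{\prod_{c\in\mathcal{C}}z_c^{M^{(c)}}}=\sum_{\st\in\stsp}\pi(\st)\,\E{\Big(\prod_{j=1}^i z_{g_j}\Big)\prod_{c\in\mathcal{C}}z_c^{N^{(c)}}\;\Big|\;\st}.
\]

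First I would argue that, conditionally on $\st$, the vector $(g_1,\ldots,g_i)$ of active-customer types is independent of the types of the inactive customers, and that the $n_j$ inactive customers lying between tokens $T_j$ and $T_{j+1}$ are, conditionally on $\st$, i.i.d.\ with $\Pro{\text{type}=c}=\theta_{c,j}$ and independent across $j$. The second statement is exactly what the proof of Theorem~\ref{thm:jointPGFNumberCustWaitingClass} amounts to: it is equivalent to $\E{\prod_{c\in\mathcal{C}}z_c^{N^{(c)}}\mid\st}=\prod_{j=1}^i\big(\sum_{c\in\mathcal{C}}\theta_{c,j}z_c\big)^{n_j}$, which I would reuse directly. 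For the active part, $\E{\prod_{j=1}^i z_{g_j}\mid\st}=\sum_{(c_1,\ldots,c_i)\in\mathcal{C}^i}G_{c_1,\ldots,c_i}(\st)\prod_{j=1}^i z_{c_j}$ holds by the very definition of $G$. Combining these via the conditional independence yields
\[
\E{\prod_{c\in\mathcal{C}}z_c^{M^{(c)}}\mid\st}=\Big(\sum_{(c_1,\ldots,c_i)\in\mathcal{C}^i}G_{c_1,\ldots,c_i}(\st)\prod_{j=1}^i z_{c_j}\Big)\prod_{j=1}^i\Big(\sum_{c\in\mathcal{C}}\theta_{c,j}z_c\Big)^{n_j}.
\]

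Next I would substitute the product form~\eqref{eq:statDist} for $\pi(\st)$, split $\prod_{j=1}^{\phi(\st)}\eta(j)^{-1}=\prod_{j=1}^i\eta(j)^{-1}\prod_{j=1}^{\sum_{k=1}^i n_k}\eta(i+j)^{-1}$ using $\phi(\st)=i+\sum_{k=1}^i n_k$, and reorganise the sum over $\stsp$ as $\sum_{i=0}^K\sum_{(T_1,\ldots,T_i)\in\mathcal{T}^i}\sum_{(n_1,\ldots,n_i)\in\NN_0^i}$. The factor $\alpha_j^{n_j}$ coming from~\eqref{eq:statDist} merges with $\big(\sum_{c\in\mathcal{C}}\theta_{c,j}z_c\big)^{n_j}$ into $\big(\alpha_j\sum_{c\in\mathcal{C}}\theta_{c,j}z_c\big)^{n_j}$, and the resulting expression matches~\eqref{eq:jointPGFMcUnconditional} term by term.

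The main obstacle is the conditional independence/conditional distribution claim used in the first step. Since the Markov descriptor $\st$ records only which token each active customer holds and the number of inactive customers in each segment --- not any customer types --- one must show that this ``missing'' type information, given $\st$, factorises as asserted. I would handle this by passing to the refined Markov chain that additionally tags every customer with its type; the descriptor $\st$ is a lumping of it, and the product-form structure together with Conditions~\ref{cdn:assignment} and~\ref{cdn:OI} makes the stationary law of the refined chain factor over customers, so that within a fixed $\st$ the inactive customers in segment $j$ are independently resampled from the Poisson-thinned law $\theta_{\cdot,j}$ and are independent of $(g_1,\ldots,g_i)$, whose joint conditional law is $G$ by definition. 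As the inactive-customer half of this argument already underlies the proof of Theorem~\ref{thm:jointPGFNumberCustWaitingClass}, the only genuinely new bookkeeping is isolating the active-customer law $G$ and checking its independence from the inactive types.
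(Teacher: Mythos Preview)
Your approach is essentially identical to the paper's: both decompose $M^{(c)}$ into the inactive count $N^{(c)}$ plus the active-customer indicator sum, condition on the state $\st$, invoke the multinomial computation from the proof of Theorem~\ref{thm:jointPGFNumberCustWaitingClass} for the inactive part and the definition of $G$ for the active part, and then substitute \eqref{eq:statDist}. The paper's version is terser and leaves the conditional independence of $(g_1,\ldots,g_i)$ and the inactive-customer types implicit; you are actually more scrupulous in flagging this as the substantive step and sketching why it holds.
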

\begin{proof}
The proof is given in Appendix \ref{app:thm:jointPGFnumberCustPresentClass}.
\end{proof}
By realising that $\sum_{\{c_1, \ldots, c_i\} \in \mathcal{C}^i} G_{c_1, \ldots, c_i}(T_1, n_1, \ldots, T_i, n_i) = 1$, we again note that a PGF for the total number of customers present in the system immediately follows.
\begin{corollary}\label{cor:jointPGFnumberCustPresentGeneral}
For any $z \in \{\bar{c}\in\CC:|\bar{c}|<1\}$, the PGF of the total number of customers present in the system is given by
\begin{equation}
\E{z^M}=\sum_{i=0}^K \sum_{(T_1, \ldots, T_i)\in \mathcal{T}^{i}} \pi((0))\frac{\prodlambda}{\prodk} z^i\prod_{j=1}^i\frac 1{\eta(j)}\sum_{\{n_1, \ldots, n_i\} \in \NN_0^i}\prod_{j=1}^{\sum_{k=1}^i n_k} \frac 1{\eta(i+j)} \prod_{j=1}^i (\al_jz)^{n_j}. \label{eq:jointPGFMUnconditional}
\end{equation}
\end{corollary}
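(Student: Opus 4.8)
The plan is to obtain this corollary as a direct specialisation of Theorem \ref{thm:jointPGFnumberCustPresentClass}. Since the total number of customers present satisfies $M = \sum_{c \in \mathcal{C}} M^{(c)}$, we have $\E{z^M} = \E{\prod_{c \in \mathcal{C}} z^{M^{(c)}}}$, so it suffices to evaluate the right-hand side of \eqref{eq:jointPGFMcUnconditional} at $z_c = z$ for every $c \in \mathcal{C}$ and to check that the result collapses to \eqref{eq:jointPGFMUnconditional}.

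First I would simplify the two class-dependent factors. For the inner weighting over the types of the active customers, note that $\prod_{j=1}^i z_{c_j} = z^i$ whenever $z_c = z$ for all $c$, irrespective of the choice of $(c_1, \ldots, c_i)$; hence
\begin{equation*}
\sum_{\{c_1, \ldots, c_i\} \in \mathcal{C}^i} G_{c_1, \ldots, c_i}(T_1, n_1, \ldots, T_i, n_i) \prod_{j=1}^i z_{c_j} = z^i \sum_{\{c_1, \ldots, c_i\} \in \mathcal{C}^i} G_{c_1, \ldots, c_i}(T_1, n_1, \ldots, T_i, n_i) = z^i,
\end{equation*}
where the last equality uses that $G_{c_1, \ldots, c_i}(\cdot)$ is the conditional probability mass function of $(g_1, \ldots, g_i)$ given the state, and therefore sums to one. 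For the factor stemming from the inactive customers, the identity $\sum_{c \in \mathcal{C}} \theta_{c,j} = 1$ (already recorded in the text preceding Corollary \ref{cor:jointPGFNumberCustWaitingGeneral}) gives $\al_j \sum_{c \in \mathcal{C}} \theta_{c,j} z_c = \al_j z$, so that $(\al_j \sum_{c \in \mathcal{C}} \theta_{c,j} z_c)^{n_j} = (\al_j z)^{n_j}$.

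Substituting both simplifications into \eqref{eq:jointPGFMcUnconditional} and pulling the factor $z^i$, which does not depend on $n_1, \ldots, n_i$, out of the sum over $\NN_0^i$ yields exactly \eqref{eq:jointPGFMUnconditional}. I do not expect a genuine obstacle here: the entire analytic content sits in Theorem \ref{thm:jointPGFnumberCustPresentClass}, and the only points requiring (minor) care are the two normalisation identities $\sum_{c \in \mathcal{C}} \theta_{c,j} = 1$ and $\sum_{\{c_1, \ldots, c_i\} \in \mathcal{C}^i} G_{c_1, \ldots, c_i}(\cdot) = 1$, together with the remark that the rearrangement of sums is legitimate because, on the domain $|z| < 1$, all the series involved converge absolutely, just as in Theorem \ref{thm:jointPGFnumberCustPresentClass}.
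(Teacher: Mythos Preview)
Your proposal is correct and follows essentially the same approach as the paper: specialise Theorem~\ref{thm:jointPGFnumberCustPresentClass} by setting $z_c = z$ for all $c$, use $\sum_{(c_1,\ldots,c_i)\in\mathcal{C}^i} G_{c_1,\ldots,c_i}(\cdot)=1$ to collapse the active-customer factor to $z^i$, and use $\sum_{c\in\mathcal{C}}\theta_{c,j}=1$ to collapse the inactive-customer factor to $(\alpha_j z)^{n_j}$. The paper states exactly these two ingredients and nothing more.
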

\begin{proof}
The proof follows by similar arguments as those which led to Corollary \ref{cor:jointPGFNumberCustWaitingGeneral}, together with the fact that $\sum_{\{c_1, \ldots, c_i\} \in \mathcal{C}^i} G_{c_1, \ldots, c_i}(T_1, n_1, \ldots, T_i, n_i) = 1$.
\end{proof}

\begin{remark}\label{rem:G}
A general expression for $G_{c_1, \ldots, c_i}(T_1, n_1, \ldots, T_i, n_i)$, the probability that, provided the system is in state $\st = (T_1, n_1, \ldots, T_i, n_i)$, tokens $T_1, \ldots, T_i$ are claimed by customers with types $c_1, \ldots, c_i$, respectively, seems hard to derive. For many applications, the derivation of an expression for $G_{c_1, \ldots, c_i}(T_1, n_1, \ldots, T_i, n_i)$ is, however, straightforward. For example, if the token sets $\mathcal{T}_c, c \in \mathcal{C}$, are disjoint, $G_{c_1, \ldots, c_i}(T_1, n_1, \ldots, T_i, n_i) = \ind{\cap_{j=1}^i \{T_j \in \mathcal{T}_{c_j}\}}$.
\end{remark}

\subsection{The time-till-token of customers}\label{sec:waitingTime}
We proceed to derive expressions for the time-till-token $W_c$ of a type-$c$ customer. For any $c \in \mathcal{C}$, the order in which type-$c$ customers arrive is the same as the order in which type-$c$ customers acquire a token, since tokens are always claimed by the longest waiting eligible customer. Therefore, the distributions of $N^{(c)}$ and $W^{(c)}$ satisfy the assumptions for the distributional form of Little's law to hold (cf.\ \cite{KeilsonServi}). This law dictates that, for any $s \in \{\bar{c}\in\CC:\Re(\bar{c}) > 0\}$, 
\begin{equation}\label{eq:little}
 \E{e^{-sW_c}} = \E{\left(\frac{\la_c-s}{\la_c}\right)^{N^{(c)}}}.
\end{equation}
Therefore, the following expression for $W_c$, the time-till-token of type-$c$ customers, can be obtained.

\begin{theorem}\label{thm:waiting_time} The time-till-token of a type-$c$ customer, $W_c$, satisfies, for any $s \in \{\bar{c}\in\CC:\Re(\bar{c}) > 0\}$,
\begin{align}
\E{e^{-s W_c}} &= \sum_{i=0}^K \sum_{(T_1, \ldots, T_i)\in \mathcal{T}^{i}} \pi((0))\frac{\prodlambda}{\prodk}\times \notag\\
&\qquad\times\prod_{j=1}^i\frac 1{\eta(j)}\sum_{\{n_1, \ldots, n_i\} \in \NN_0^i}\prod_{j=1}^{\sum_{k=1}^i n_k} \frac 1{\eta(i+j)} \prod_{j=1}^i \left(\al_j\left(1-\frac{s\ind{c\in \mathcal \mathcal{U}(\{T_1, \ldots, T_j\})}}{\lambda_{\mathcal{U}(\{T_1, \ldots, T_j\})}}\right)\right)^{n_j} \label{eq:LSTWc}.
\end{align}
\end{theorem}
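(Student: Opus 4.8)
The plan is to combine the distributional form of Little's law \eqref{eq:little} with the explicit expression for the joint PGF of $\{N^{(c)}: c \in \mathcal{C}\}$ from Theorem \ref{thm:jointPGFNumberCustWaitingClass}. Since \eqref{eq:little} expresses $\E{e^{-sW_c}}$ purely in terms of $\E{(\cdot)^{N^{(c)}}}$, evaluated at the single argument $(\lambda_c - s)/\lambda_c$, the task reduces to substituting $z_c = (\lambda_c - s)/\lambda_c$ and $z_{c'} = 1$ for all $c' \neq c$ into \eqref{eq:jointPGFNcUnconditional}, and then simplifying the inner sum $\sum_{c' \in \mathcal{C}} \theta_{c',j} z_{c'}$.

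First I would verify that the hypotheses of the distributional form of Little's law (cf.\ \cite{KeilsonServi}) are met: customers of type $c$ arrive in a Poisson stream, they acquire tokens in FIFO order among type-$c$ customers (because a released token is always reclaimed by the longest-waiting eligible customer), and the time-till-token of a type-$c$ customer depends only on the type-$c$ customers that arrived before it together with the service-completion process, which is independent of later arrivals. This justifies \eqref{eq:little}. Next I would take \eqref{eq:jointPGFNcUnconditional}, fix $c \in \mathcal{C}$, and set $z_c = (\lambda_c - s)/\lambda_c$ while $z_{c'} = 1$ for $c' \neq c$. The only place the $z$'s enter is through the factor $\bigl(\alpha_j \sum_{c' \in \mathcal{C}} \theta_{c',j} z_{c'}\bigr)^{n_j}$ for each $j$. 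Using $\sum_{c' \in \mathcal{C}} \theta_{c',j} = 1$ (noted just after Theorem \ref{thm:jointPGFNumberCustWaitingClass}), we get
\begin{equation*}
\sum_{c' \in \mathcal{C}} \theta_{c',j} z_{c'} = 1 - \theta_{c,j} + \theta_{c,j}\frac{\lambda_c - s}{\lambda_c} = 1 - \theta_{c,j}\frac{s}{\lambda_c}.
\end{equation*}
Then, recalling $\theta_{c,j} = \lambda_c \ind{c \in \mathcal{U}(\{T_1, \ldots, T_j\})}/\lambda_{\mathcal{U}(\{T_1, \ldots, T_j\})}$, the quantity $\theta_{c,j} s/\lambda_c$ collapses to $s\ind{c \in \mathcal{U}(\{T_1, \ldots, T_j\})}/\lambda_{\mathcal{U}(\{T_1, \ldots, T_j\})}$, which is exactly the bracketed expression appearing inside the $n_j$-th power in \eqref{eq:LSTWc}. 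Substituting this back termwise into \eqref{eq:jointPGFNcUnconditional} yields \eqref{eq:LSTWc} directly, since every other factor in the two displays is identical.

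The only mild technical point to address is the domain of validity: Little's law as quoted requires $\Re(s) > 0$, and to apply \eqref{eq:jointPGFNcUnconditional} we need $z_c = (\lambda_c - s)/\lambda_c$ to satisfy $|z_c| < 1$, which is not automatic for all $s$ with $\Re(s) > 0$. I would handle this by first establishing the identity for $s$ in a neighbourhood of the real axis where $|(\lambda_c - s)/\lambda_c| < 1$ (e.g.\ $0 < \Re(s) < 2\lambda_c$ with $|\Im(s)|$ small), where both sides are well-defined and the substitution is legitimate, and then invoke analytic continuation: both sides of \eqref{eq:LSTWc} are analytic in $s$ on $\{\bar c \in \CC : \Re(\bar c) > 0\}$ (the left side because $W_c \geq 0$, the right side because the series converges uniformly on compacta there by the same argument that gives convergence of the normalising constant). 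Since they agree on a set with an accumulation point, they agree on the whole half-plane. This continuation argument — rather than any genuine computation — is the only step requiring a little care; the algebraic reduction itself is routine.
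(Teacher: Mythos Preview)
Your proposal is correct and follows exactly the paper's approach: substitute $z_{c'}=1$ for $c'\neq c$ and $z_c=(\lambda_c-s)/\lambda_c$ into \eqref{eq:jointPGFNcUnconditional} and invoke the distributional Little's law \eqref{eq:little}. You actually supply more detail than the paper does, in particular the explicit simplification of $\sum_{c'}\theta_{c',j}z_{c'}$ and the analytic-continuation argument to extend the identity to the full right half-plane; the paper states the result for all $s$ with $\Re(s)>0$ without commenting on the fact that $|(\lambda_c-s)/\lambda_c|<1$ can fail there.
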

\begin{proof}
The theorem follows by substitution of $z_d = 1$ for all $d \neq c$ in \eqref{eq:jointPGFNcUnconditional} and combining the result with \eqref{eq:little}. 
\end{proof}

%

\begin{remark}\label{sec:overallW}
From Theorem \ref{thm:waiting_time}, one can easily obtain the Laplace-Stieltjes transform (LST) of $W$ by conditioning on the type of customer, which can trivially be seen to be of type $c$ with probability $\frac{\la_c}{\la}$. In other words, $\E{e^{-sW}} = \sum_{c \in \mathcal{C}} \frac{\la_c}{\la}\E{e^{-s W}}$.
\end{remark}

\subsection{The sojourn time of customers}\label{sec:sojournTime}
In general, it is hard to derive general expressions for the sojourn time $S_c$ of a type-$c$ customer from expressions for $M^{(c)}$, as type-$c$ customers do not necessarily depart the system in the order of their arrival. Therefore, we only consider the sojourn time for instances of the model where type-$c$ customers  do depart the system in the order of arrival. This has as an advantage that the distributional form of Little's law for the quantities $M^{(c)}$ and $S^{(c)}$ holds true (cf.\ \cite{KeilsonServi}):
\begin{equation}\label{eq:littleForOverall}
 \E{e^{-sS_c}} = \E{\left(\frac{\la_c-s}{\la_c}\right)^{M^{(c)}}}
\end{equation}
for any $s \in \{\bar{c}\in\CC:\Re(\bar{c}) > 0\}$. Despite the additional assumption, the following theorem allows us to characterise the sojourn time distribution in a variety of applications in Section \ref{sec:applications}.

\begin{theorem}\label{thm:sojournTime}
If type-$c$ customers depart the system in the order of arrival, the LST of their sojourn time $S_c$ is, for $s \in \{\bar{c}\in\CC:\Re(\bar{c}) > 0\}$, given by
\begin{align}
\E{e^{-sS_c}}&=\sum_{i=0}^K \sum_{(T_1, \ldots, T_i)\in \mathcal{T}^{i}} \pi((0))\frac{\prodlambda}{\prodk} \times\notag\\
&\qquad\times\prod_{j=1}^i\frac 1{\eta(j)}\sum_{\{n_1, \ldots, n_i\} \in \NN_0^i}\left(\sum_{\{c_1, \ldots, c_i\} \in \mathcal{C}^i} G_{c_1, \ldots c_i}(T_1, n_1 \ldots, T_i, n_i)\left(\frac{\la_c-s}{\la_c}\right)^{\sum_{j=1}^i \ind{c_i=c}}\right) \times\notag\\
&\qquad\qquad\times\prod_{j=1}^{\sum_{k=1}^i n_k}\frac 1{\eta(i+j)} \prod_{j=1}^i \left(\al_j\left(1-\frac{s\ind{c\in\mathcal{U}(\{T_1, \ldots, T_j\})}}{\la_c}\right)\right)^{n_j}. \label{eq:LSTSc}
\end{align}
\end{theorem}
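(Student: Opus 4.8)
The plan is to deduce \eqref{eq:LSTSc} from the joint PGF of the per-type numbers in system, $\E{\prod_{c\in\mathcal{C}}z_c^{M^{(c)}}}$, already obtained in Theorem \ref{thm:jointPGFnumberCustPresentClass}, by specialising its argument and then invoking the distributional form of Little's law \eqref{eq:littleForOverall}. The hypothesis that type-$c$ customers leave in their order of arrival is used exactly once, and only to justify that the pair $(M^{(c)}, S_c)$ satisfies the conditions of \cite{KeilsonServi}, so that \eqref{eq:littleForOverall} is available.

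Concretely, fix $c\in\mathcal{C}$ and, in \eqref{eq:jointPGFMcUnconditional}, set $z_c = (\la_c-s)/\la_c$ and $z_d = 1$ for every $d\neq c$. Only two ingredients of the right-hand side respond to this substitution. First, inside the $G_{c_1,\ldots,c_i}$-weighted sum the product $\prod_{j=1}^i z_{c_j}$ collapses to $\big((\la_c-s)/\la_c\big)^{\sum_{j=1}^i \ind{c_j=c}}$, since every factor indexed by a $c_j\neq c$ equals $1$; this is precisely the weight appearing next to $G_{c_1,\ldots,c_i}$ in \eqref{eq:LSTSc}. Second, the base $\al_j\sum_{c'\in\mathcal{C}}\theta_{c',j}z_{c'}$ of the $n_j$-th power becomes $\al_j\big(1-(s/\la_c)\theta_{c,j}\big)$, using the identity $\sum_{c'\in\mathcal{C}}\theta_{c',j}=1$ recorded just before Corollary \ref{cor:jointPGFNumberCustWaitingGeneral}; unfolding the definition of $\theta_{c,j}$ then turns this into the base of the $n_j$-th power in \eqref{eq:LSTSc}. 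All remaining factors — $\pi((0))$, $\Pi_\lambda/\Pi_k$, and the two $\eta$-products — do not involve $z_{c'}$ for any $c'$ and are transcribed unchanged. Pairing the identity so obtained with \eqref{eq:littleForOverall} gives \eqref{eq:LSTSc}.

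The only point requiring a touch more than routine algebra is the domain of validity: Theorem \ref{thm:jointPGFnumberCustPresentClass} provides the joint PGF for arguments in the open unit disc, whereas $(\la_c-s)/\la_c$ may leave that disc for $s$ with $\Re(s)>0$ of large modulus. The standard remedy applies: establish the identity first for $s$ in a neighbourhood of the origin (e.g.\ small positive real $s$), where the substituted arguments do lie in the polydisc, and then extend it to the whole half-plane $\{\Re(s)>0\}$ by analytic continuation, both sides being analytic there (the left side as the LST of a nonnegative random variable, the right side as a finite sum of absolutely convergent geometric series under the stability assumption). I do not anticipate a genuine obstacle beyond this bookkeeping; the substance of the argument is the two-line reduction of the substituted PGF described above, entirely parallel to the proof of Theorem \ref{thm:waiting_time}.
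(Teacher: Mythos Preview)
Your approach is exactly the paper's: substitute $z_c=(\la_c-s)/\la_c$ and $z_d=1$ for $d\neq c$ into \eqref{eq:jointPGFMcUnconditional} and then apply the distributional Little's law \eqref{eq:littleForOverall}, in direct parallel to the proof of Theorem \ref{thm:waiting_time}. One minor caveat: when you unfold $\theta_{c,j}$, the $\la_c$ cancels and the denominator you actually obtain is $\lambda_{\mathcal{U}(\{T_1,\ldots,T_j\})}$ (just as in \eqref{eq:LSTWc}), not $\la_c$ as printed in \eqref{eq:LSTSc}; this is a typo in the displayed formula rather than a flaw in your argument.
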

\begin{proof}
The proof is the same as that of Theorem \ref{thm:waiting_time}, but instead of \eqref{eq:jointPGFNcUnconditional} and \eqref{eq:little}, \eqref{eq:jointPGFMcUnconditional} and \eqref{eq:littleForOverall} are used.
\end{proof}
\begin{remark}
It is worth emphasising that, for any class $c\in\mathcal{C}$, should $|\mathcal{T}_c|=1$, the assumption that type-$c$ customers depart the system in the order of arrival is always valid. If $\mathcal{T}_c = \{t\}$, and it holds moreover that token $t$ can only be claimed by type-$c$ customers (i.e.\ $\mathcal{C}_t = \{c\}$), the PGF of the sojourn time distribution simplifies:
\begin{align}
\E{e^{-sS_c}}&=\sum_{i=0}^K \sum_{(T_1, \ldots, T_i)\in \mathcal{T}^{i}} \pi((0))\frac{\prodlambda}{\prodk} \frac{\la_c-s\ind{t\in\{T_1, \ldots, T_i\}}}{\la_c}\times\notag\\
&\qquad\times\prod_{j=1}^i\frac 1{\eta(j)}\sum_{\{n_1, \ldots, n_i\} \in \NN_0^i}\prod_{j=1}^{\sum_{k=1}^i n_k}\frac 1{\eta(i+j)} \prod_{j=1}^i \left(\al_j\left(1-\frac{s\ind{t\in\{T_1, \ldots, T_j\}}}{\la_c}\right)\right)^{n_j}. \label{eq:LSTScSimple}
\end{align}
We will use these simplification amply in Section \ref{sec:applications}.
\end{remark}
\begin{remark}
In Remark \ref{rem:closedFormPiZero}, we saw that the stationary distribution allows for a closed-form expression when $\eta(\cdot) = 1$. In Appendix \ref{app:eta1}, simplified expressions for several performance measures studied in this section are given, which follow by substitution of $\eta(\cdot)=1$. These expressions show that this parameter setting also leads to closed-form expressions for the performance measures. In fact, it follows from inversion of these expressions that for all $ c \in \mathcal{C}$, the quantities $N_{c}$ and $N$ can be interpreted as a weighted convolution of geometric random variables. Likewise, the time-till-token of a customer (either of a particular type or of an arbitrary type by Remark \ref{sec:overallW}), can be interpreted as a weighted convolution of exponential random variables. 
\end{remark}

\section{Generalisation of two existing classes of models}
\label{sec:generalisation}
In this section, we show that both the multi-type customer and server model (cf.\ \cite{Visschers12}) as well as the order-independent queue (cf.\ \cite{Krzesinski11}) can be seen as special cases of the token-based central queue as analysed in this paper. The results are summarised in the Venn diagram presented in Figure~\ref{fig:venn}. This figure also categorises applications of token-based central queues, which we will consider in Section \ref{sec:applications}.

\begin{figure}[t]
\centering
\includegraphics[width=0.60\textwidth]{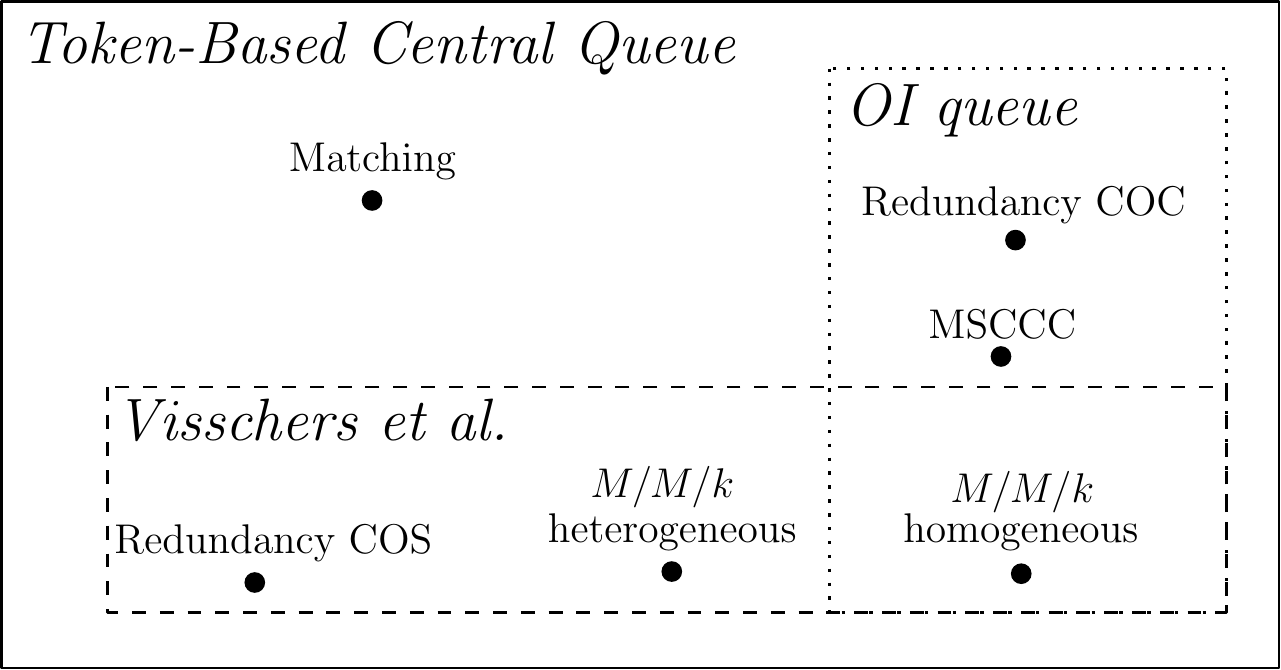}
\caption{A classification of token-based central queues}
\label{fig:venn}
\end{figure}

\subsection{Multi-type customer and server model}\label{sec:Visschers}
In the multi-type customer and server model of~\cite{Visschers12}, customers of type~$c$ arrive at the system according to a Poisson process  with rates~$\lambda_c$ and have an exponentially distributed service requirement with rate~1.  
There are $K$ machines and machine~$i$ works at rate~$\mu_i$. Each customer type has a set of compatible machines it can be served at. Whenever a machine becomes idle, it takes the earliest arrived customer in the queue that it can process. An arriving customer that finds more than one compatible server idle is assigned to one of the servers according to a random assignment rule that satisfies a certain assignment condition. 

The model of \cite{Visschers12} is  a special case of our token-based central queue. There are $K$ tokens, where each token represents a machine. Taking $\eta(\cdot)=1$ and $s_j(T_1,\ldots, T_i)=\mu_j$ in our token-based central queue, we directly retrieve the multi-type customer and server model. The assignment condition of \cite{Visschers12} coincides with Condition \ref{cdn:assignment} and it is immediately seen that the order-independent condition (Condition~\ref{cdn:OI}) is also satisfied. We hence retrieve that the stationary distribution is of product-form type, as was shown in~\cite{Visschers12}.

\subsection{The OI queue}\label{sec:OI}
The  order-independent (OI) queue was first described in~\cite{Krzesinski11}. This model consists of a single central queue where arriving multiclass customers wait
in a FIFO order. 
Customers of type $i$
arrive according to a Poisson 
process with rate $\lambda_i$ and have an exponentially distributed service requirement with rate 1.
The generic state descriptor as considered in~\cite{Krzesinski11} is 
$\stOI = (c_1, \ldots, c_n)$, where $n$ is the number of  customers in the system and $c_j$ denotes the type of the
$j^{th}$ customer in the central queue. 
Let $\stsp^{(OI)}$ denote the corresponding state space.
For a given state $\stOI\in \stsp^{(OI)}$, let ${\mu}_j^{(OI)}(\stOI)$ 
denote the departure rate associated with the $j$-th customer.
In an OI queue it is assumed that the following condition holds.
\begin{condition}
\label{cond:X}
In a given state $\stOI=(c_1,\ldots, c_n)$, each of the rates ${\mu}_j^{(OI)}(\stOI)$, $j=1,\ldots, n$,  can be written as
\begin{equation}
\label{eq:OIeq}
{\mu}_{j}^{(OI)}(\stOI) = \eta^{(OI)}(n) s_j^{(OI)}(c_1, \ldots, c_n),
\end{equation}
where
\begin{enumerate}
\item  $s_j^{(OI)}(c_1, \ldots, c_n) = s_j^{(OI)}(c_1, \ldots, c_j)$ 
for any $1 \leq j \leq i$
\item $k^{(OI)}(c_1, \ldots, c_n) := \sum_{j=1}^{n} s_j^{(OI)}(c_1, \ldots, c_j)$ is 
independent of any permutation of $(c_1, \ldots, c_n)$ and
\item $\eta^{(OI)}(n) > 0$ for $n > 0.$
\end{enumerate}
\end{condition}




We see a close similarity with the order-independent condition as stated in Condition~\ref{cdn:OI}.  In the following results, we will clarify the connection between the two modelling frameworks. We will use the notion of indistinguishable tokens as introduced in Section \ref{sec:L}, as well as the state descriptor of the form $\st^{(L)} = (L_1, n_1, \ldots, L_i, n_i)$ and the corresponding steady-state distribution (cf.\ \eqref{eq:statdistL}). We start out with a preparatory lemma.

\begin{lemma}
\label{lem:tau}
For any token-based central queue where each token set $\mathcal{T}_c$, $c \in \mathcal{C}$, consists of indistinguishable tokens, there exists a function $\mathcal{\tau}: \stsp^{(OI)} \rightarrow \stsp^{(L)}$, where $\tau(\stOI)\in \stsp^{(L)}$ denotes the unique state $(L_1,n_1,\ldots, L_i,n_i)$  corresponding  to   the  state  $\stOI \in \stsp^{(OI)}$. 
\end{lemma}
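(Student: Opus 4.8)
The goal is to construct, for a token-based central queue in which every token set $\mathcal{T}_c$ consists of mutually indistinguishable tokens, a well-defined map $\tau$ sending an OI-state $\stOI=(c_1,\ldots,c_n)$ to the unique aggregated state $(L_1,n_1,\ldots,L_i,n_i)\in\stsp^{(L)}$ that it induces. The plan is to build $\tau$ algorithmically by simulating the token-claiming dynamics on the FIFO list $(c_1,\ldots,c_n)$, and then argue that the resulting aggregated state is independent of any ambiguity in that simulation, which is exactly where the indistinguishability hypothesis is needed.

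\textbf{Key steps.}
First I would set up the construction. Process the customers $c_1,\ldots,c_n$ in FIFO order. Maintain a list of currently held token labels. When customer $c_m$ is processed, it is the longest-waiting type-$c_m$ customer not yet assigned a token; it claims a compatible token if one is available, i.e.\ if $\mathcal{T}_{c_m}$ is not entirely covered by the set of already-held tokens (here "covered" must be interpreted at the level of token labels, counting multiplicities within each indistinguishable class $\tilde{\mathcal{T}}_k$). If it can claim a token, it becomes active and is appended as a new block $L_i$; otherwise it is inactive and increments the $n_j$ counter of the most recent active block whose held tokens exhaust $\mathcal{T}_{c_m}$, i.e.\ the relevant block in $\mathcal{U}(\{L_1,\ldots,L_j\})$. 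Second, I would observe that the only freedom in this procedure is \emph{which} concrete token within a label class $\tilde{\mathcal{T}}_k$ an arriving customer claims when several are free; since all tokens in $\tilde{\mathcal{T}}_k$ share the same label, the same compatible-customer set $\mathcal{C}_t$, the same arrival rates and the same contribution to $k(\cdot)$, replacing one by another does not change the sequence of \emph{labels} appearing in $(L_1,\ldots,L_i)$ nor the block sizes $n_j$. Third, I would note that the ordering of the active blocks $L_1,\ldots,L_i$ and the inactive counts is forced by the FIFO discipline of $\stOI$, so once the label multiset evolution is fixed, the aggregated state is uniquely determined. Finally, one checks that the produced tuple indeed lies in $\stsp^{(L)}$: the inactive customers counted in $n_j$ have types in $\mathcal{U}(\{L_1,\ldots,L_j\})$ by construction, and the arrival-ordering constraints of the aggregated descriptor hold because they are inherited directly from the order of $(c_1,\ldots,c_n)$.

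\textbf{Main obstacle.}
The delicate point is well-definedness: showing that the map does not depend on the tie-breaking choices made when a customer faces several free indistinguishable tokens. The hard part will be phrasing "a token of label $\tilde{\mathcal{T}}_k$ is available" purely in terms of label-multiplicities held so far, and verifying that whether $c_m$ can be served — and into which block an inactive $c_m$ falls — depends on the history only through the label sequence, not through token identities. This follows from the defining properties of indistinguishability in Section~\ref{sec:L} ($\mathcal{C}_s=\mathcal{C}_t$, the equality of the various $\lambda$- and $k$-functions), but it requires a careful induction on $m$ that the partial label state after processing $c_1,\ldots,c_m$ is independent of the choices made. Everything else — existence, membership in $\stsp^{(L)}$ — is a routine unwinding of the definitions.
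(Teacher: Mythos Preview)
Your proposal is correct and follows the same idea as the paper: walk through $(c_1,\ldots,c_n)$ in FIFO order, let each customer claim a compatible token if one is free, and read off the resulting label sequence and block counts. The paper's proof compresses everything into one observation---since \emph{all} tokens in $\mathcal{T}_c$ are indistinguishable, each class has exactly one eligible label, so there is no label-level ambiguity at all; consequently your ``main obstacle'' (tie-breaking, the induction on $m$, and the $\lambda$- and $k$-equalities from Section~\ref{sec:L}) never arises and can be dropped.
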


\begin{proof}
Since each class has one token label it can select from, this guarantees that there is no ambiguity about how the tokens are distributed among the customers. By keeping track of the order of arrival and the token labels allotted to the customers, one can construct the unique state~$\st^{(L)}=(L_1, n_1, \ldots, L_i, n_i)$ corresponding to~$\stOI = (c_1, \ldots, c_n)$, that is, the function $\mathcal{\tau}(\cdot)$ as stated in the lemma exists. The quantity $n_i$ represents customers without a claimed token.
\end{proof}

This lemma allows us to prove the following theorem, which exposes the connection between OI queues and token-based central queues.

\begin{theorem} 
\label{lem:keep}
For a given model, the following are equivalent:
\begin{itemize}
 \item[(1)]  the model fits in the OI queue framework;
 \item[(2)] the model can be seen as  a token-based central queue where the token sets associated with each of the classes each consist of indistinguishable tokens.
 \end{itemize}
\end{theorem}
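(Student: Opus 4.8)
The statement is an equivalence, so there are two implications to establish, and the natural plan for both is to exhibit an explicit correspondence between an OI queue and a token-based central queue whose per-class token sets consist of indistinguishable tokens. The common structural reason this works is that in both frameworks the departure rate of a customer factors as $\eta(\cdot)\,s(\cdot)$ with a permutation-invariant aggregate ($k(\cdot)$, resp.\ $k^{(OI)}(\cdot)$), so essentially all of the work is in matching up the two state descriptors and transporting the rate functions across the correspondence of Lemma~\ref{lem:tau}.

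For (2)$\Rightarrow$(1), I would start from a token-based central queue in which, for each class $c$, the tokens of $\mathcal{T}_c$ are mutually indistinguishable. As noted in the proof of Lemma~\ref{lem:tau}, this forces every class to have exactly one compatible token label, so one may pass to the aggregated descriptor $\st^{(L)}=(L_1,n_1,\ldots,L_i,n_i)$ of Section~\ref{sec:L} and use $\tau$ to read off, for each OI state $\stOI=(c_1,\ldots,c_n)$, its unique corresponding $L$-state. I would then define an OI description by $\eta^{(OI)}(n):=\eta(n)$ and, for $\stOI=(c_1,\ldots,c_n)$, by letting $s_j^{(OI)}(\stOI)$ equal $s_m(L_1,\ldots,L_m)$ when the $j$-th customer of $\stOI$ is the $m$-th active customer of $\tau(\stOI)$ (holding a token of label $L_m$), and $0$ when the $j$-th customer is inactive. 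The three items of Condition~\ref{cond:X} then have to be verified: positivity of $\eta^{(OI)}$ is inherited from Condition~\ref{cdn:OI}(3); the prefix dependence $s_j^{(OI)}(c_1,\ldots,c_n)=s_j^{(OI)}(c_1,\ldots,c_j)$ holds because, by the FIFO claiming rule and the one-label-per-class structure, the active/inactive status of the first $j$ customers and the label string carried by the active ones among them is already determined by $c_1,\ldots,c_j$; and permutation-invariance of $k^{(OI)}(c_1,\ldots,c_n)=\sum_j s_j^{(OI)}(c_1,\ldots,c_j)$ reduces to that of $k(\cdot)$, since permuting $(c_1,\ldots,c_n)$ only permutes the label string of the active customers while leaving the multiset of those labels unchanged. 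Finally one checks that, transported along $\tau$, the arrival and departure transition rates of this OI queue coincide with those of the token-based queue, so the two describe the same Markov process.

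For the converse (1)$\Rightarrow$(2), I would build a token-based central queue from an OI queue satisfying Condition~\ref{cond:X}: keep $\mathcal{C}$, and for each class $c$ introduce a set $\mathcal{T}_c$ of pairwise indistinguishable tokens with $\mathcal{C}_t=\{c\}$, the sets $\mathcal{T}_c$ pairwise disjoint, and $|\mathcal{T}_c|$ equal to the largest rank a class-$c$ customer (counted in arrival order among class-$c$ customers) can have while still receiving a positive rate somewhere in the OI queue — taking $\mathcal{T}_c$ countably infinite if no such bound exists. Since tokens within a class are indistinguishable, any assignment rule trivially satisfies Condition~\ref{cdn:assignment}. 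I would set $\eta:=\eta^{(OI)}$ and define $s_m(T_1,\ldots,T_m)$ to be $s_{j(m)}^{(OI)}$ evaluated at any class prefix consistent with the first $m$ active customers holding $T_1,\ldots,T_m$, where $j(m)$ is the arrival position of the $m$-th active customer, and then verify Condition~\ref{cdn:OI} (dependence on $T_1,\ldots,T_m$ only, permutation-invariance of $k(\cdot)$ from that of $k^{(OI)}$, positivity of $\eta$) together with the fact that an OI arrival/departure of class $c$ maps to a token-based arrival/departure of class $c$ that claims an available token of $\mathcal{T}_c$ precisely when not all of $\mathcal{T}_c$ is in use.

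The main obstacle is a well-definedness step buried in this last construction: the token-based descriptor records only the classes of the \emph{active} customers, so for $s_m(T_1,\ldots,T_m)$ to be well defined one must know that the OI service rate of a served customer does not depend on the classes or relative positions of the permanently zero-rate customers ahead of it — equivalently, that the OI dynamics lump consistently when such ``overflow'' customers are aggregated by class, and that the reassignment probabilities $\rprob{k}{n}{\st}{T}$, $\sprob{k}{n}{\st}{T_j}$ built from the $\beta_k(\cdot)$ of Section~\ref{sec:departureRatesReassign} are exactly the conditional probabilities induced by that lumping. I expect this to be extractable from Condition~\ref{cond:X} and the permutation-invariance of $k^{(OI)}$, but spelling it out is where the real bookkeeping lies; a milder, separate subtlety is the appearance of a countably infinite token set for OI queues with unbounded service parallelism (e.g.\ processor sharing), which must either be admitted or excluded by an implicit boundedness hypothesis.
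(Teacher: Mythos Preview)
Your direction $(2)\Rightarrow(1)$ is essentially the paper's argument: use $\tau$ from Lemma~\ref{lem:tau}, set $\eta^{(OI)}(n)=\eta(n)$, let $s_j^{(OI)}(\stOI)$ be the $s$-rate of the corresponding active customer in $\tau(\stOI)$ (and $0$ if the $j$-th customer is inactive), and read off Condition~\ref{cond:X} from Condition~\ref{cdn:OI}.

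For $(1)\Rightarrow(2)$ you take a noticeably more complicated route than the paper, and this is exactly where your ``main obstacle'' comes from. The paper does not try to detect which OI customers are permanently zero-rate and cap $|\mathcal{T}_c|$ accordingly. It simply gives every class $c$ an \emph{infinite} set of indistinguishable tokens with label $c$. Then every customer is active, all $n_j=0$, and the aggregated descriptor $(L_1,\ldots,L_n)$ is literally the OI state $(c_1,\ldots,c_n)$. With $\eta:=\eta^{(OI)}$ and $s_j(L_1,\ldots,L_j):=s_j^{(OI)}(c_1,\ldots,c_j)$, Conditions~\ref{cdn:assignment} and~\ref{cdn:OI} are immediate from Condition~\ref{cond:X}; there is no lumping to verify, no ambiguity in defining $s_m(T_1,\ldots,T_m)$ via a ``consistent class prefix'', and no dependence on $j(m)$ to worry about. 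The well-definedness problem you flag is genuine for your finite-$|\mathcal{T}_c|$ construction, but it is self-inflicted: the case you label a ``milder subtlety'' (countably infinite token sets) is precisely the construction that dissolves the obstacle.

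For what it is worth, your finite-token variant can be completed: permutation invariance of $k^{(OI)}$ lets you shuffle any always-zero-rate customer to the end of a prefix, where it contributes $0$, so $k^{(OI)}$ of a prefix depends only on the multiset of \emph{active} classes in it, and hence $s_m$ is well defined as a function of $(L_1,\ldots,L_m)$ alone. But this extra bookkeeping buys nothing for the theorem as stated; the paper's infinite-token trick is the cleaner path.
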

\begin{proof}
$(1)\to(2)$: Given a model that fits in the OI queue framework, we define 
the token set of customer type~$c$, $\mathcal{T}_c$, as a set containing an infinite number of indistinguishable tokens with label $c$. Since each customer type is then represented by its own token label, the state $\st^{(L)} = (L_1, \ldots, L_i)$ coincides with that of $\stOI = (c_1, \ldots, c_i)$. Since the model satisfies Condition~\ref{cond:X}, it is direct that the token sets $\mathcal{T}_c$  define a token-based central queue.
 
$(2)\to(1)$:
For a token-based central queue where the tokens in each $\mathcal{T}_c$ are indistinguishable, Lemma~\ref{lem:tau} implies the  existence of a function $\tau$
that transforms a state $\stOI$ into the corresponding state $\st^{(L)}$.  Since the departure rates of the customers in the token-based central queue are a function of $\st$, or equivalently of $\st^{(L)}$, one can equivalently define departure rate functions  $\mu_j^{(OI)}( \cdot )$ for the $j$-th  active customer in the system as a function of $\tau(\stOI)$.  These departure rate functions satisfy Condition~\ref{cond:X}, so that (1) of Theorem~\ref{lem:keep} follows.  See Appendix~\ref{app:lem:keep} for a detailed proof.
\end{proof}

The above theorem states that, given some model, one can interpret it as an OI queue if and only if the model can be interpreted as a token-based central queue where the token set of each customer type contains indistinguishable tokens. It is important to note the difference in the two state representations $\st^{(OI)}$ and ~$\st^{(L)}$: the types of all the customers are known in the OI queue, while only the customer types associated with the \textit{active customers} can be known in our token-based representation. However, this sacrifice of detail leads to a richer class of models, as the above results show that the class of token-based central queues can handle a larger set of applications than the class of OI queues. 

For both representations, a product-form solution for the steady-state distribution exists; see \eqref{eq:statdistL} for our representation and the corollary below for the OI representation.

\begin{corollary}
\label{cor}
If the model fits in the OI queue framework, the steady-state distribution in terms of the OI state descriptor,  denoted by $\pi^{(OI)}(\stOI)$,  is given by
\begin{equation}
\label{eq:OIstat}\pi^{(OI)}(\stOI) = \pi^{(OI)}((0)) \prod_{i=1}^n \frac{\lambda_{c_i}}{\eta^{(OI)}(i)k^{(OI)}(c_1, \ldots, c_i)},
\end{equation}
 as was derived in~\cite{Krzesinski11}.
\end{corollary}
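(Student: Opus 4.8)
The plan is to derive the OI-representation product form \eqref{eq:OIstat} from our aggregated product form \eqref{eq:statdistL} by pushing it through the bijection $\tau$ of Lemma~\ref{lem:tau}. Concretely, fix a model fitting the OI framework and a state $\stOI = (c_1, \ldots, c_n) \in \stsp^{(OI)}$, and let $\st^{(L)} = \tau(\stOI) = (L_1, n_1, \ldots, L_i, n_i)$ be the corresponding token-based state (with $L_j$ the label of the token claimed by the $j$-th active customer). Since $\tau$ is a bijection, $\pi^{(OI)}(\stOI)$ is simply $\pi((L_1,n_1,\ldots,L_i,n_i))$ as given by \eqref{eq:statdistL}, and the task reduces to showing that the right-hand side of \eqref{eq:statdistL} equals $\pi^{(OI)}((0))\prod_{i'=1}^{n}\lambda_{c_{i'}}/(\eta^{(OI)}(i')k^{(OI)}(c_1,\ldots,c_{i'}))$.

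First I would set up the dictionary between the two descriptors. In the $(1)\to(2)$ construction each customer type $c$ owns its own label, so $L_j$ is just the type $g_j$ of the $j$-th active customer, and because there is exactly one eligible label per class, the inactive customers counted by $n_j$ are precisely those of the classes in $\mathcal{U}(\{L_1,\ldots,L_j\})$; moreover $\eta^{(OI)}(\cdot)=\eta(\cdot)$ and $k^{(OI)}=k$ by the identification already used in the proof of Theorem~\ref{lem:keep}. Next I would check the three building blocks of \eqref{eq:statdistL}: (i) $\prod_{j=1}^{\phi(\st)}1/\eta(j) = \prod_{i'=1}^{n}1/\eta^{(OI)}(i')$ since $\phi(\st)=n$; (ii) $\lambda_{L_j}(\{L_1,\ldots,L_{j-1}\})$, the rate at which a customer immediately claims a fresh token of label $L_j$, equals $\lambda_{c}$ for the appropriate type $c$, because that label is available iff no class-$c$ customer currently holds it — this is where the indistinguishability and the one-label-per-class property are used; and (iii) the factor $\bigl(\lambda_{\mathcal{U}(\{L_1,\ldots,L_j\})}/k(L_1,\ldots,L_j)\bigr)^{n_j}$ must be re-expanded over the individual inactive customers. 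For point (iii) I would sort the $n_j$ inactive customers in their FIFO order and recognise that each such customer, say of type $c$ sitting in block $j$, contributes one factor $\lambda_c / k(L_1,\ldots,L_j)$, where $k(L_1,\ldots,L_j)$ is exactly $k^{(OI)}$ evaluated at the prefix of $\stOI$ ending at that customer (the active tokens ahead of it are $L_1,\ldots,L_j$ and the inactive customers ahead of it, being of classes in $\mathcal{U}(\{L_1,\ldots,L_j\})$, contribute $s^{(OI)}=0$ to $k^{(OI)}$ by Condition~\ref{cond:X}). Multiplying these contributions over all blocks $j=1,\ldots,i$ and all $n_j$ customers reassembles $\prod_{j=1}^{i}\bigl(\lambda_{\mathcal{U}(\{L_1,\ldots,L_j\})}\bigr)^{n_j}$ in the numerator — after repeatedly using $\lambda_{\mathcal{U}(\{L_1,\ldots,L_j\})}=\sum_{c\in\mathcal{U}(\{L_1,\ldots,L_j\})}\lambda_c$ and summing over the possible type assignments — and the remaining denominators $k^{(OI)}(c_1,\ldots,c_{i'})$ for the active-customer positions. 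Collecting everything, the three building blocks fuse into $\prod_{i'=1}^{n}\lambda_{c_{i'}}/(\eta^{(OI)}(i')k^{(OI)}(c_1,\ldots,c_{i'}))$, and $\pi((0))=\pi^{(OI)}((0))$ since both are the reciprocal of the same normalising sum reindexed by $\tau$.

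The main obstacle I anticipate is the bookkeeping in step (iii): one must be careful that when the inactive customers in a block are of several types, the single geometric factor $(\lambda_{\mathcal{U}(\{L_1,\ldots,L_j\})}/k)^{n_j}$ in the aggregated form genuinely equals the sum over all type-labellings of those $n_j$ slots of the product of the individual $\lambda_c/k$ factors — this is a multinomial-expansion argument that relies crucially on $k(L_1,\ldots,L_j)$ being the \emph{same} for every such labelling, which is exactly the permutation-invariance in Condition~\ref{cdn:OI}/Condition~\ref{cond:X} together with $s^{(OI)}=0$ for the inactive classes. Once that identity is in hand, the rest is a direct term-by-term matching. I would present the argument as a short chain of equalities starting from \eqref{eq:statdistL}, invoking Lemma~\ref{lem:tau} and Theorem~\ref{lem:keep} for the identification of the rate functions, and flag that the computation is carried out in full in Appendix~\ref{app:lem:keep}.
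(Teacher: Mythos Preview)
Your overall strategy—specialise the token-based product form \eqref{eq:statdistL} via the $(1)\!\to\!(2)$ construction of Theorem~\ref{lem:keep}—is the paper's strategy too, but you have missed the one observation that makes the argument a one-liner: in that construction every class $c$ is given \emph{infinitely many} indistinguishable tokens with label~$c$. Consequently every arriving customer immediately claims a token, so the aggregated state satisfies $n_j=0$ for all $j$ and $i=n$; the map $\tau$ is then literally the identity $(c_1,\ldots,c_n)\mapsto (L_1,0,\ldots,L_n,0)$ with $L_j=c_j$. Substituting $n_j=0$, $\phi(\st)=n$ and $\lambda_{L_j}(\{L_1,\ldots,L_{j-1}\})=\lambda_{c_j}$ into \eqref{eq:statdistL} gives \eqref{eq:OIstat} directly, and your step~(iii) never arises.

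Because you did not notice $n_j=0$, your proposal drifts into a setting where it would actually fail. If inactive customers were present, $\tau$ would \emph{not} be a bijection: for a fixed block count $n_j>0$ there are many OI states $(c_1,\ldots,c_n)$ with different type assignments for the inactive customers mapping to the same $(L_1,n_1,\ldots,L_i,n_i)$. Your ``multinomial expansion'' in step~(iii) is precisely the statement that $\pi\bigl((L_1,n_1,\ldots,L_i,n_i)\bigr)=\sum_{\stOI\in\tau^{-1}(\st^{(L)})}\pi^{(OI)}(\stOI)$, i.e.\ an aggregation identity; it does not pin down the individual $\pi^{(OI)}(\stOI)$ and therefore does not prove \eqref{eq:OIstat}. (Your justification in (ii), ``that label is available iff no class-$c$ customer currently holds it'', also reveals the same confusion—with infinitely many tokens the label is \emph{always} available.) The fix is simply to record that $n_j\equiv 0$ in the construction you are invoking; then $\tau$ is genuinely a bijection, step~(iii) is vacuous, and the proof is exactly the short computation in Appendix~\ref{app:cor}.
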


\begin{proof}
From Theorem~\ref{lem:keep} together with \eqref{eq:statdistL}, we can recover the steady-state distribution in terms of the OI state descriptor. See Appendix~\ref{app:cor} for the full proof. 
\end{proof}

\section{Applications}\label{sec:applications}
In this section, we treat a few applications that can be analysed by interpreting them as token-based central queues. For illustrative purposes, we show in Section 7.1 and Section 7.2 how to analyse an M/M/K queue with heterogeneous service rates and an MSCCC queue using our model. The first of these models fits in the framework of \cite{Visschers12}, while the second is an OI queue. Then, we apply the results on the COS and the COC redundancy models in Section \ref{sec:redundancy}, and in the process obtain new expressions of several performance measures for these models. While the COS model can be interpreted as an instance of the model of \cite{Visschers12}, the COC redundancy model can be interpreted as an OI queue. Finally, in Section \ref{sec:matching}, we discuss matching models, which are neither OI queues nor fit the framework of \cite{Visschers12}. These applications have been categorised in Figure \ref{fig:venn}. It is worth emphasising that, especially for the latter models, this section includes results on performance measures that have not been derived in the literature before.

\subsection{M/M/K queue with heterogeneous service rates}\label{sec:heteroMMK}
We first regard the M/M/K queue with heterogeneous service rates. This is a single-class queue served by $K$ servers labeled $t_1, \ldots, t_K$, to which customers arrive according to a Poisson process with rate $\lambda$. Upon arrival, the customer is assigned any available server uniformly at random. In case there are no available servers, the customer waits in the queue which is processed by the servers in order of arrival. A customer who is served by server $t_i$ has an exponentially distributed service time with parameter $\mu(t_i)$. We denote the sum of the service rates by $\mu = \sum_{i=1}^K \mu(t_i)$. This system was studied in detail by \cite{Gumbel}, and by applying the token-based framework, much of the performance analysis from that paper can be recovered.

\subsubsection{Choice of model parameters}
By introducing a token for every server, one can interpret the M/M/K queue with heterogeneous service rates as a token-based central queue. As the servers are identified by the tokens, we label the tokens as $t_1, \ldots, t_K$ as well. To receive service, a customer must hold any of the $K$ tokens. The assignment rule of this system requires that upon arrival of a customer, when tokens $T_1, \ldots, T_{j-1} \in \mathcal{T}$ are unavailable, the customer claims any of the other tokens $t \in \mathcal{T} \backslash\{T_1, \ldots, T_{j-1}\}$ with uniform probability. In other words, for $j = 1, \ldots, K$ and $t \in \mathcal{T} \backslash \{T_1, \ldots, T_{j-1}\}$, we have $\lambda_{t}(T_1, \ldots, T_{j-1}) = \frac{\lambda}{K-j+1}$. Since there is only one customer class, $\lambda_{\mathcal{U}(\mathcal{T})} = \lambda$ and $\lambda_{\mathcal{U}(\mathcal{R})}=0$ for any strict subset $\mathcal{R}$ of $\mathcal{T}$.  Condition \ref{cdn:assignment} is now satisfied. To match the departure rates of the M/M/K queue, we adopt the parameters $\eta(j) = 1$ for all $j \in \NN$, $s_j(T_1, \ldots, T_i) = \mu(T_j)$ and $k(T_1, \ldots, T_i) = \sum_{j=1}^i \mu(T_j)$, which satisfy Condition \ref{cdn:OI}. Finally, since the system has a single customer class ($\mathcal{C} = \{c\})$, we have that $G_{c, \ldots, c}(T_1, n_1, \ldots, T_i, n_i) = 1$.

\subsubsection{Performance analysis}\label{sec:heteroMMKPerfEval}
Equation \eqref{eq:statDist} leads to the following distribution. For any $\st = (T_1, T_2, T_3, \ldots, T_k, n_k)\in \stsp$, we have that
\begin{equation}\label{eq:statDistMMK}
\pi(\st) = \pi((0)) \frac{\prod_{j=1}^i\frac{\lambda}{K-j+1}}{\prod_{j=1}^i \sum_{l=1}^j \mu(T_l)}\left(\frac{\lambda}{\mu}\right)^{\ind{i=K}n_K} = \pi((0)) \frac{\lambda^i(K-i)!}{K!\prod_{j=1}^i \sum_{l=1}^j \mu(T_l)}\left(\frac{\lambda}{\mu}\right)^{\ind{i=K}n_K},
\end{equation}
while $\pi(\st)=0$ for all other states. The term $\pi((0))$ is a normalising constant. It is possible, however, to drop the ordering of the tokens from the state descriptor, while the system remains Markovian. Instead, states of the form $(n, \mathcal{R})$ can be introduced, where $n$ is the number of waiting customers and $\mathcal{R}$ represents the (orderless) set of servers/tokens in service. By aggregation of states, we obtain 
\begin{equation*}
\pi(n, \mathcal{R}) = \frac{\pi((0))\lambda^{|\mathcal{R}|}(K-|\mathcal{R}|)!}{K!}\left(\frac{\lambda}{\mu}\right)^{n}\sum_{(T_1, \ldots, T_{|\mathcal{R}|})\in\underline{\mathcal{R}}}\frac{1}{\prod_{j=1}^{|\mathcal{R}|}\sum_{l=1}^j \mu(T_{l})},
\end{equation*}
where $\underline{\mathcal{R}}$ is the set of all possible permutations of the tokens in $\mathcal{R}$. It can be deduced from Corollary \ref{cor:jointPGFNumberCustWaitingGeneral} that  the stationary number of inactive customers are geometrically ($1-\frac{\lambda}{\mu}$) with probability $\left(\frac{\lambda}{\mu}\right)^K$, and equals zero otherwise. Likewise, Theorem \ref{thm:waiting_time} leads to the fact that the stationary waiting time is exponentially ($\mu-\lambda$) distributed with probability $\left(\frac{\lambda}{\mu}\right)^K$ and equals zero otherwise.

The (PGF of the) number of customers in the system is by virtue of Corollary \ref{cor:jointPGFnumberCustPresentGeneral} given by
\begin{equation*}
\E{z^M} = \pi((0)) \sum_{i=0}^K\sum_{(T_1, \ldots, T_i)\in\mathcal{T}^{i}}\frac{\lambda^i(K-i)!z^i}{K!\prod_{j=1}^i\sum_{l=1}^j \mu(T_l)}\left(\frac{\lambda}{\mu}\right)^{\ind{i=K}n_K},
\end{equation*}
for any $z \in\{\bar{c}\in\CC:|\bar{c}|<1\}$. From this, it follows directly that also $M$ has a geometric law with probability $\left(\frac{\lambda}{\mu}\right)^K$. Finally, to obtain expressions for the sojourn time, the results of Section \ref{sec:sojournTime} do not apply, since customers of equal types do not necessarily leave the system in the order of their arrival. Instead, through a PASTA-argument and by conditioning on the server that an arriving customer will be served by, the following LST for the sojourn time $S$ can be derived for any $s \in \{\bar{c}\in\CC:\Re(\bar{c}) > 0\}$:
\begin{align*}
\E{e^{-sS}} &= \left(\sum_{\mathcal{R}\subset\mathcal{T}:\mathcal{T}\backslash\mathcal{R}\neq\emptyset} \pi(0, \mathcal{R}) \frac{\lambda}{K-|\mathcal{R}|}\sum_{T\in\mathcal{T}\backslash\mathcal{R}}\frac{\mu(T)}{\mu(T)+s}\right) + \\
&\qquad\qquad + \sum_{n=0}^\infty\pi(n, \mathcal{T}) \E{e^{-sW} \mid W>0}\sum_{T\in \mathcal{T}} \frac{\mu(T)}{\mu}\frac{\mu(T)}{\mu(T)+s} \\
&=\left(\sum_{\mathcal{R}\subset\mathcal{T}:\mathcal{T}\backslash\mathcal{R}\neq\emptyset} \pi(0, \mathcal{R}) \frac{\lambda}{K-|\mathcal{R}|}\sum_{T\in\mathcal{T}\backslash\mathcal{R}}\frac{\mu(T)}{\mu(T)+s}\right) + \left(\frac{\lambda}{\mu}\right)^K\frac{\mu-\lambda}{\mu-\lambda+s}\sum_{T\in \mathcal{T}} \frac{\mu(T)}{\mu}\frac{\mu(T)}{\mu(T)+s},
\end{align*}
where terms between brackets represent the case where an arriving customer is immediately served.

\begin{remark}
The M/M/K queue with heterogeneous service rates is not an OI queue. This follows since the service rate is driven by the server/token and not by the class of the customer which it is serving. When the service rates of the servers are equal, however, we obtain a conventional Erlang C model, which does fit in the OI queue framework. 
\end{remark}

\begin{remark}
For this system, it would make sense to introduce an assignment rule so that an arriving customer chooses the server with the highest service rate. However, such an assignment rule would violate Condition \ref{cdn:assignment}.
\end{remark}

\subsection{The MSCCC queue}\label{sec:MSCCC}
We now illustrate an application where results on the sojourn time in Section \ref{sec:sojournTime} can be applied. We study the Multi-server Station with Concurrent Classes of Customers (MSCCC) queue. Studied in \cite{Boudec} and \cite{Crosby90}, this queue contains multiple servers and multiple classes of customers, where at most one customer of any type can be in service. More particularly, the MSCCC queue consists of $k$ identical servers serving customers at unit rate. Customers of type $c_l$, $l\in \mathbb{N}$, arrive according to a Poisson process with rate $\lambda_{c_l}$ and have exponential($\mu$) service requirements. Upon arrival, when a server is available and no other customer of his/her type is in service, the customer will go into service at an arbitrary free server. When no server is available or another customer of its type is already in service, the customer waits in line. When a server becomes available, it scans the queue from the front for the first customer eligible for service (i.e. the longest waiting customer of a type that is not in service at the moment). Through the token-based framework, we derive expressions for relevant performance measures of the MSCCC queue, which to the best of the authors' knowledge has not been done before. 

\subsubsection{Choice of model parameters}
To model the MSCCC queue using the token-based representation, we introduce for every customer type $c_l$ a token $t_l$, which is dedicated to type-$c_l$ customers. Thus, token $t_l$ will always be held by the oldest type-$c_l$ customer in the system if there is any, otherwise it is available. Given the one-to-one correspondence between customer types and tokens, we will henceforth refer to the type of a customer by its corresponding token. For example, we refer to the arrival rate of a type-$c_l$ customer with $\lambda_{t_l}$. Then, it holds that $\lambda_{t_l}(T_1, \ldots, T_i) = \lambda_{t_l}$ in case $t_l \notin \{T_1, \ldots, T_i\}$. It follows trivially that $\lambda_{\mathcal{U}(T_1, \ldots, T_i)} = \sum_{j=1}^i \lambda_{T_j}$. The departure rates can be characterised by choosing $\eta(j) = 1$, $s_j(T_1, \ldots, T_i) = \mu\ind{j\le k}$ and $k(T_1, \ldots, T_i) = \min\{i,k\}\mu$ for any combination of active tokens $(T_1, \ldots, T_i)$ and $j = 1, \ldots, i$. Note that these parameter settings satisfy Conditions \ref{cdn:assignment} and \ref{cdn:OI}. Also, it is trivial to note that $G_{c_1, \ldots, c_i}(T_1, n_1, \ldots, T_i, n_i) = \ind{\bigcap_{j=1}^i \mathcal{C}_{T_j}) = \{c_j\}}$.

\subsubsection{Performance analysis}
Treating the MSCCC queue as a token-based central queue with the model parameters outlined above, the stationary distribution now follows from \eqref{eq:statDist}. In particular, for any state $\st = (T_1, n_1, \ldots, T_i, n_i) \in \stsp$,
\begin{align*}
\pi(\st) &= \pi((0))\frac{\prod_{j=1}^i \lambda_{T_j}}{\left(\prod_{j=k+1}^i k\mu\right)\left(\prod_{j=1}^{\min(i,k)}j\mu\right)} \prod_{j=1}^i\left(\frac{\sum_{l=1}^j \lambda_{T_l}}{\min(j,k)\mu}\right)^{n_j} = \pi((0)) \frac{\prod_{j=1}^i \lambda_{T_j}\left(\frac{\sum_{l=1}^j \lambda_{T_l}}{\min(j,k)\mu}\right)^{n_j}}{\mu^i\min(i,k)!k^{\max(i-k, 0)}},
\end{align*}
where $\pi((0)) = \left(\sum_{\st \in \stsp} \frac{\prod_{j=1}^i \lambda_{T_j}}{\mu^i\min(i,k)!k^{\max(i-k, 0)}}\right)^{-1}$ is a normalising constant. It is worth emphasising that in this system, $N^{(c_l)}$ cannot be interpreted as the number of waiting type-$c_l$ customers, since a customer can hold a token while not receiving service. It makes more sense to compute the joint number of per-type customers in the system. After substitution of the model parameters in \eqref{eq:jointPGFMcUnconditional} and simplification of the result, we obtain, for $z_c \in\{\bar{c}\in\CC:|\bar{c}|<1\}$,
\begin{equation*}
\E{\prod_{c \in \mathcal{C}}z_c^{M^{(c)}}} = \sum_{i=0}^K \sum_{(T_1, \ldots, T_i) \in \mathcal{T}^{i}} \pi((0)) \prod_{j=1}^i \frac{\lambda_{T_j}z_{T_j}}{k(T_1, \ldots, T_j)-\sum_{l=1}^j \lambda_{T_l}z_{T_l}}.
\end{equation*}
Similarly, the time-till-token of a type-$c_l$ customer does not reflect its waiting time. The waiting-time distribution of a type-$c_l$ customer can instead be recovered through its sojourn time distribution, as the sojourn time distribution in this case is a convolution of the waiting-time distribution and the exponential($\mu$) service-time distribution. The MSCCC queue satisfies the condition that same-type customers depart the system in the order they arrive, and hence, the sojourn time can be computed using Theorem \ref{thm:sojournTime}. For the MSCCC queue, the LST of the sojourn time  distribution of a type-$c_l$ customer in \eqref{eq:LSTScSimple} simplifies to
\begin{equation*}
\E{e^{-sS_{c_l}}} = \sum_{i=0}^K \sum_{(T_1, \ldots, T_i)\in\mathcal{T}^{i}} \pi((0)) \prod_{j=1}^i \frac{\lambda_{T_j}-s\ind{T_j=t_l}}{\min(j,k)\mu-\sum_{l=1}^j \lambda_{T_l}+s\ind{t_l \in \{T_1, \ldots, T_j\}}},
\end{equation*}
where $s \in \{\bar{c}\in\CC:\Re(\bar{c}) > 0\}$. Finally, the LST of the waiting-time distribution of a type-$c_l$ customer is given by $(\mu+s)\E{e^{-sS_{c_l}}}/\mu$.

\begin{remark}
For ease of notation, we assumed that each of the customer classes share the same service requirement distribution. However, the case where customer type have mutually different exponential service requirement distributions can also be modelled as a token-based central queue. Furthermore, extensions of this queue have been studied in the literature, where for every subset of customer types, there is a maximum defined of how many customers with those types can be in service at any particular point in time. Also this extension falls in the token-based framework when choosing the model parameters carefully. However, its (per-type) stationary sojourn time distribution cannot be derived by the methods derived in this paper, as the assumptions required to use the distributional form of Little's law are violated.
\end{remark}

\subsection{Redundancy models}\label{sec:redundancy}
A timely application of the token-based central queue is given by redundancy systems. The study of redundancy systems has gained momentum recently, as mentioned in Section \ref{sec:related_work}. One example of such a system is the redundancy-$d$ cancel-on-start (COS) model studied in \cite{Ayesta18}. This model constitutes a system with $K$ single-server FCFS queues and homogeneous servers providing service at equal speed. Customers arrive according to a Poisson process at rate $\lambda$. Upon arrival, the customers choose at random $d$ out of $K$ queues, and to each of those queues, a copy of the customer is sent, each copy having its own independent, exponentially($\mu$) distributed service requirement. Under COS, once service on any of these copies has started, all the other copies of the same customer are removed from the system, and only the sole remaining copy will be serviced. In case service on multiple copies could start at the same time since a customer find multiple of its $d$ eligible servers idle upon arrival, it will apply a uniform assignment rule. That is, it will uniformly at random select an idle server where service on a copy will be continued, and copies sent to other idle servers are instantly terminated. It is worth emphasising that it was shown by \cite{Ayesta18} that this model fits the framework of \cite{Visschers12}, and hence can be interpreted as a token-based central queue. In Section \ref{sec:cos}, we recall how the redundancy-$d$ COS model can be interpreted as a token-based central queue, after which we complement the results of \cite{Ayesta18} by deriving novel expressions for performance measures such as the customers' waiting-time distribution. Then, in Section \ref{sec:coc}, we study a variant of this model, namely the redundancy-$d$ cancel-on-completion (COC) model. This model shares the same characteristics as the COS-model, with the exception that redundant customer copies will now only be removed once any of the copies has \emph{completed} service. Therefore, it is now possible that multiple copies of the same customer are in service at the same time. In \cite{Gardner17}, it is shown that the steady-state distribution for that model allows for a product-form solution when using a different state descriptor than ours. Furthermore, they analyse the sojourn time distribution in limiting regimes and derive the mean sojourn time for the general case. It was shown in \cite{Ayesta18} that the COC-variant of the redundancy-$d$ model can be interpreted as a special version of a token-based central queue, and as such the stationary distribution of the model using our state descriptor also leads to a product-form stationary distribution. In this section, using results from Section \ref{sec:perfEval}, we supplement the analysis of \cite{Gardner17} by giving a characterisation of the complete distribution of the sojourn time. We also give expressions for other performance measures. It should be noted that, although we will view the COS model and the COC model as token-based central queues, the COS and COC model both actually consists of $K$ parallel queues.

\subsubsection{The redundancy-$d$ COS model}\label{sec:cos}
To analyse the COS model, we first present it as an instance of a token-based central queue by choosing adequate model parameters. 

\paragraph{Choice of model parameters}\mbox{ }\\
To interpret the COS-model as a token-based central queue, we introduce a token set $\mathcal{T} = \{t_1, \ldots, t_K\}$, where token $t_i$ has a one-to-one correspondence to the $i$-th of the $K$ servers. We also introduce customer types that correspond to the set of servers/tokens an arriving customer replicates to. Thus, equal-type customers send copies to the same $d$ out of $K$ servers. As a consequence, there are $\binom{K}{d}$ customer types, labeled $c_1, \ldots, c_{\binom{K}{d}}$, which are ordered lexicographically. When a token is said to be claimed by a customer, the customer is taken into service by the server corresponding to the claimed token, so that copies sent to other servers are cancelled. It follows from this setting that if customers of type $c$ send copies to servers in the set $\mathcal{R}\subset\mathcal{T}$, customers of type $c$ are only able to claim the tokens in the set $\mathcal{R}$, i.e. $\mathcal{T}_c = \mathcal{R}$.

Since an arriving customer is of any of the $\binom{K}{d}$ types with uniform probability, we have $\lambda_{c_i} = \frac{\lambda}{\binom{K}{d}}$. Deriving $\lambda_{t}({T_1, \ldots, T_{j-1}})$ is more intricate. Suppose that an arriving customer finds $a$ tokens available (or servers idle). It will then immediately claim any one of them with probability $\frac{1}{a}$. The uniform assignment rule also dictates that when tokens $(T_1, \ldots, T_{j-1})$ are active, this means that there are $\binom{K-j}{a-1}\binom{j-1}{d-a}$ customer types of which an arriving customer, upon arrival, would find a tagged token $t$ among the $a$ available tokens that it could immediately claim. That is, $t$ is one of the eligible available tokens, there are $a-1$ others out of the $K-j$ available tokens ($\binom{K-j}{a-1}$ possibilities) and the remaining $d-a$ out of the $d$ eligible tokens are among $T_1, \ldots, T_{j-1}$ ($\binom{j-1}{d-a}$ possibilities). Combining these observations and adhering to the standard convention that $\binom{m}{n} = 0$ for $0\le m <n$, we have 
\begin{equation*}
\lambda_{t}(T_1, \ldots, T_{j-1}) = \sum_{a=1}^{\min\{K-j+1, d\}} \frac{\lambda}{\binom{k}{d}}\frac{1}{a}\binom{K-j}{a-1}\binom{j-1}{d-a}
\end{equation*}
for any $(T_1, \ldots,T_{j-1})\in \mathcal{T}^{j-1}$ and any $t \in \mathcal{T}\backslash\{T_1, \ldots, T_{j-1}\}$. Due to symmetry, it is immediate that Condition \ref{cdn:assignment} is satisfied. We also reason that $\lambda_{\mathcal{U}(\{T_1, \ldots, T_i\})} = \frac{\lambda\binom{i}{d}}{\binom{K}{d}}$, since out of the $\binom{K}{d}$ customer types, there are $\binom{i}{d}$ that replicate to $d$ queues corresponding to servers/tokens in the set $\{T_1, \ldots, T_i\}$. 

The selection of departure rate parameters is significantly easier. When a copy of a customer starts service (i.e., claims a token), its departure rate from the system equals $\mu$. This is reflected by choosing $\eta(j)=1$ for all $j \in \mathbb{N}$ and $s_j(T_1, \ldots, T_i) = \mu$ for all possible sets $(T_1, \ldots, T_i)$ of $i$ tokens, so that $k(T_1, \ldots, T_i) = i\mu$. Under the parameter settings just introduced, the token-based central queue has the exact same behaviour as a redundancy-$d$ model. Furthermore, by probabilistic reasoning, we have that $G_{c_1, \ldots, c_i}(T_1, n_1, \ldots, T_i, n_i) = \frac{\lambda\ind{\bigcap_{j=1}^i \{T_j \in \mathcal{T}_{c_j}\}}}{\binom{k-1}{d-1}}$, since any server/token can be selected by $\binom{k-1}{d-1}$ customer types.

\paragraph{Performance analysis}\mbox{ }\\
With the model parameters selected as above, we immediately obtain the stationary distribution of the redundancy-$d$ COS model. For any $\st = (T_1, n_1, \ldots, T_i, n_i)\in \stsp$, we have that
\begin{equation*}
\pi(\st) = \pi((0)) \frac{\prod_{j=1}^i \sum_{a=1}^{\min\{K-j+1, d\}} \frac{\lambda}{\binom{k}{d}}\frac{1}{a}\binom{K-j}{a-1}\binom{j-1}{d-a}}{i!\mu^i}\prod_{j=d}^i \left(\frac{\lambda\binom{j}{d}}{j\mu\binom{k}{d}}\right)^{n_j},
\end{equation*}
where $\pi((0))$ is a normalising constant. Note that this expression equals the stationary distribution found for the redundancy-$d$ COS model in \cite[Proposition 1]{Ayesta18}, as expected. Next, it can be deduced from Corollaries \ref{cor:jointPGFNumberCustWaitingGeneral} and \ref{cor:jointPGFnumberCustPresentGeneral} that the number of waiting customers and the total number of customers in the system are, for $z \in\{\bar{c}\in\CC:|\bar{c}|<1\}$ characterised by
\begin{equation*}
\E{z^N} = \sum_{i=0}^K\sum_{(T_1, \ldots, T_i)\in\mathcal{T}^{i}}\pi((0)) \prod_{j=1}^i\frac{\sum_{a=1}^{\min\{K-j+1, d\}} \frac{\lambda}{\binom{k}{d}}\frac{1}{a}\binom{K-j}{a-1}\binom{j-1}{d-a}}{j\mu - \lambda\frac{\binom{j}{d}z}{\binom{k}{d}}}
\end{equation*}
and
\begin{equation*}
\E{z^M} = \sum_{i=0}^K\sum_{(T_1, \ldots, T_i)\in\mathcal{T}^{i}}\pi((0))z^i \prod_{j=1}^i\frac{\sum_{a=1}^{\min\{K-j+1, d\}} \frac{\lambda}{\binom{k}{d}}\frac{1}{a}\binom{K-j}{a-1}\binom{j-1}{d-a}}{j\mu - \lambda\frac{\binom{j}{d}z}{\binom{k}{d}}}.
\end{equation*}
Likewise, since the time-till-token of this system coincides with the time until a customer receives service, Theorem \ref{thm:waiting_time} gives us the (PGF of the distribution of the) customers' waiting time. Exploiting symmetry, this leads, for $s \in \{\bar{c}\in\CC:\Re(\bar{c}) > 0\}$, to
\begin{equation*}
\E{e^{-sW}} = \E{e^{-sW_{c_1}}} = \sum_{i=0}^K\sum_{(T_1, \ldots, T_i)\in\mathcal{T}^{i}}\pi((0)) \prod_{j=1}^i\frac{\sum_{a=1}^{\min\{K-j+1, d\}} \frac{\lambda}{\binom{k}{d}}\frac{1}{a}\binom{K-j}{a-1}\binom{j-1}{d-a}}{j\mu - \lambda\frac{\binom{j}{d}}{\binom{k}{d}}+s\ind{j\ge d}}.
\end{equation*}
As for the sojourn time of customers, it is worth noting that Theorem \ref{thm:sojournTime} does not apply to the COS model, since same-type customers do not claim a token in the order of arrival. Hence, the distributional form of Little's law does not hold. However, since each customer's service time is independent of its waiting time, we have that $\E{e^{-sS}} = \frac{\mu\E{e^{-sW}}}{\mu+s}$. 
\begin{remark}
As pointed out in \cite{Ayesta18} and \cite{HellemansVanHoudt}, the redundancy-$d$ COS model is equivalent to a Join-the-Shortest-Work queue, where an arriving customer opts to join the least-loaded of $d$ random queues. As a consequence, performance measures for that queue are therefore known as well.
\end{remark}

\begin{remark}
Note that for the redundancy-$d$ COS model, it is not necessary to assume that service requirements of customers copies are independent of one another. This is due to the fact that only a single copy of a customer will ever receive service, and the service requirements of the copies do not influence which copy eventually gets served. In the COC-model that we will study next, this does not hold true, so that the assumption of independent service requirements is essential.
\end{remark}

\subsubsection{The redundancy-$d$ COC model}\label{sec:coc}
Recall that the COC model differs from the COS model in that redundant copies are now only cancelled once any of the copies has \emph{completed} service. Although this difference seems minor, the performance measures are affected significantly. To allow the COC model to be interpreted as a token-based central queue, the model parameters need to be interpreted in a different way.

\paragraph{Choice of model parameters}\label{sec:cocModelParams}\mbox{ }\\
As we did for the COS model, we introduce a customer class for every choice of $d$ out of $K$ servers an arriving customer replicates to, so that there are $\binom{K}{d}$ customer classes in total. However, we do not associate tokens with servers, but with customer classes, much like the MSCCC queue treated in Section \ref{sec:MSCCC}. This is possible, since in a COC model only the oldest of the customers of any type can receive actual service by a server. This is a direct consequence of the queues of the redundancy-$d$ model being served in the order of arrival. Thus, we now introduce a token set $\mathcal{T} = \{t_1, \ldots, t_{\binom{K}{d}}\}$, where $t_i$ corresponds to the $i$-th of the $\binom{K}{d}$ customer classes. Since every customer class has its dedicated token, we have that $\lambda_{t_i}(T_1, \ldots, T_{j-1}) = \lambda_{c_i} = \frac{\lambda}{\binom{K}{d}}$ when $t_i \notin \{T_1, \ldots, T_{j-1}\}$. Similarly, we have that $\lambda_{\mathcal{U}(\{T_1, \ldots, T_i\})} = \frac{i\lambda}{\binom{K}{d}}$ and Condition \ref{cdn:assignment} is trivially satisfied.

To characterise the departure rates for the COC model, we assume that $\eta(j)=1$ for all $j \in \mathbb{N}$. Recall that $s_j(T_1, \ldots, T_i)$ can be interpreted as the departure rate of the customer that holds token $T_j$. This customer is the oldest of its class, and furthermore, among all the oldest customers within their class, the $j-1$-st oldest overall. The token mechanism dictates that customers holding tokens $T_1, \ldots, T_{j-1}$ in principle get priority over $T_j$ in receiving service. This holds true in the COC-model, since the customer holding $T_j$ will have a non-zero departure rate only when there exist servers that do not actually serve the customers holding tokens $T_1, \ldots, T_{j-1}$.  This occurs when the server associated with $T_j$ was not among the $d$ servers selected by the $j-1$ customers which arrived earlier. It is evident that the departure rate of a customer holding a token equals $\mu$ (the service rate obtained from a single server) times the number of servers that are working on copies of this customer. To summarise, when $F_j(T_1, \ldots, T_i)$ refers to the number of servers that are able to serve copies of at least one of the customers holding $T_1, \ldots, T_j$, $1\le j \le i$, we have that $s_j(T_1, \ldots, T_i) = \mu(F_j(T_1, \ldots, T_i) - F_{j-1}(T_1, \ldots, T_i))$. Note that, by nature of the function $F_j(\cdot)$, it is straightforward that $F_j(T_1, \ldots, T_i) = F_j(T_1, \ldots, T_j)$ and that $F_j(T_1, \ldots, T_j) = F_j(\bar{T}_1, \ldots, \bar{T}_j)$ for any permutation $(\bar{T}_1, \ldots, \bar{T}_j)$ of $(T_1, \ldots, T_j)$. As a consequence, $k(T_1, \ldots, T_i) = \mu F_i(T_1, \ldots, T_i)$ and the order-independent condition holds. Finally, we have that $G_{c_1, \ldots, c_i}(T_1, n_1, \ldots, T_i, n_i) = \ind{\bigcap_{j=1}^i \{T_i = t_i\}}$.

\paragraph{Performance analysis}\mbox{ }\\
By substituting of the above chosen model parameters, \eqref{thm:statDist} provides the stationary distribution of the COC model. For any state $\st = (T_1, n_1, \ldots, T_i, n_i)\in\stsp$ ,
\begin{equation*}
\pi(\st) = \pi((0)) \frac{1}{i!} \prod_{j=1}^i \left( \frac{j\lambda}{\mu\binom{K}{d}F_j(T_1, \ldots, T_i)}\right)^{n_j+1},
\end{equation*}
where $\pi((0))$ is a normalising constant. This stationary distribution is also given in \cite[Proposition 7]{Ayesta18}. However, unlike \cite{Ayesta18} or \cite{Gardner17}, we now also give transforms for the stationary number of customers in the system, as well as their sojourn time. More precisely, Corollary \ref{cor:jointPGFnumberCustPresentGeneral} now implies for $z \in \{\bar{c}\in \CC:|\bar{c}|<1\}$ that
\begin{equation*}
\E{z^M} = \sum_{i=0}^{\binom{K}{d}} \sum_{(T_1,\ldots, T_i)\in\mathcal{T}^{i}} \frac{(\lambda z)^i\pi((0))}{\prod_{j=1}^i \mu\binom{K}{d}F_j(T_1, \ldots, T_i)-j\lambda z}.
\end{equation*}
Furthermore, in the COC model it holds that customers of the same type depart the system in the order they arrive. Therefore, applying Theorem \ref{thm:sojournTime} (or more particularly, \eqref{eq:LSTScSimple}) and exploiting symmetry, we have for any $s \in \{\bar{c}\in\CC: \Re(\bar{c})>0\}$ that
\begin{align*}
\E{e^{-sS}} = \E{e^{-sS_{c_1}}} &= \sum_{i=0}^{\binom{K}{d}} \sum_{(T_1, \ldots, T_i)\in\mathcal{T}^{i}}\frac{\la-s\binom{K}{d}\ind{t_1\in\{T_1, \ldots, T_i\}}}{\la} \times\\
&\qquad\qquad\qquad\times\frac{\lambda^i\pi((0))}{\prod_{j=1}^i \left(\mu\binom{K}{d}F_j(T_1, \ldots, T_i)-j \lambda+s\binom{K}{d}\ind{t_1\in\{T_1, \ldots, T_j\}}\right)}.
\end{align*}
This concludes the performance analysis of the redundancy-$d$ COC model.


\subsection{Matching models}\label{sec:matching}
The last application that we discuss is the matching model. As discussed in \cite{AdanRighterWeiss1, AdanRighterWeiss2}, parallel FCFS matching models consist of independent arrival streams of several types of customers (the set of customer types is given by $\mathcal{T}$) and independent arrival streams of several types of servers. Each customer type is compatible to several types of servers, and the objective is to match customers with compatible servers. Customers wait in a queue until they are matched with a server. When a server arrives, it scans the queue of customers and matches with the longest waiting customer of a compatible customer type, after which both the customer and the server leave the system. In case a server finds no such customer, it departs immediately on its own. As mentioned in \cite{AdanRighterWeiss1, AdanRighterWeiss2}, these matching models have applications in many areas such as manufacturing, call centers and housing.

We apply our framework to a matching model where times between two arrivals of type-$c_i$ customers are independently and exponentially($\lambda_{c_i}$) distributed, $c_i \in \mathcal{C}$. Furthermore, arrivals of servers of any particular type form an inhomogeneous Poisson ($A(n)$) process. The varying rate $A(n)$ is modulated by the number of customers waiting in the queue: when there are $n$ customers in the queue, servers arrive at rate $A(n)$. It makes sense to assume that $A(n)$ is increasing in $n$, but this assumption is not needed for the analysis that follows. To the best of our knowledge, the case of a varying server arrival rate has not been considered before. We derive the stationary distribution and consider the distributions of the number of customers waiting for a match as well as the time that customers spend waiting.

\subsubsection{Choice of model parameters}
As it turns out, interpreting the matching model as a token-based central queue is analogous to the interpretation of a COC model as a token-based central queue. That is, if there are $K$ customer types, then the token set is given by $\mathcal{T} = \{t_1, \ldots, t_K\}$, where token $t_i$ corresponds to customer type $c_i$. When an arriving customer finds no customer of its type already waiting upon arrival, it claims the token corresponding to its type. If not, it is forced to wait until earlier-arrived customers have been matched in order to claim the token. When a token is claimed, it can be considered for a match by arriving servers. By the one-to-one correspondence of tokens to customer types, it is easy to see that $\lambda_{c_i}(T_1, \ldots, T_{j-1}) = \lambda_{t_i}$ when $t_i \notin \{T_1, \ldots, T_{j-1}\}$. In other words, the activation rate of a token is not dependent on the tokens already activated, given that $t_i$ itself is available. Due to this insensitivity, Condition \ref{cdn:assignment} holds true. We moreover have that $\lambda_{\mathcal{U}(\{T_1, \ldots, T_i\})} = \sum_{j=1}^i \lambda_{T_j}$. 

Interpreting the arrival of a server as a completion of service of a customer holding a token, we select the following departure rates. This time, we let $F_j(T_1, \ldots, T_i) = F_j(T_1, \ldots, T_j)$, $1 \le j \le i$, be the number of server types that are compatible to any of the customer types corresponding to the tokens $T_1, \ldots, T_j$. It is clear that $\eta(j) = A(j)$ for $j \in \mathbb{N}$, while $s_j(T_1, \ldots, T_i) = F_j(T_1, \ldots, T_i)-F_{j-1}(T_1, \ldots, T_i)$ for $1 \le j \le i$. As a consequence, $k(T_1, \ldots, T_i) = F_i(T_1, \ldots, T_i)$. It is straightforward that Condition \ref{cdn:OI} holds under these parameter settings and that $G_{c_1, \ldots, c_i}(T_1, n_1, \ldots, T_i, n_i) = \ind{\bigcap_{j=1}^i \{T_i = t_i\}}$.

\subsubsection{Performance analysis}
Since Conditions \ref{cdn:assignment} and \ref{cdn:OI} are satisfied, Theorem \ref{thm:statDist} confirms the finding of \cite{AdanEtAl12} that matching models generally allow for product-form stationary distributions. In particular, it follows from Theorem \ref{thm:statDist} that for any $\st = (T_1, n_1, \ldots, T_i, n_i)\in\stsp$,
\begin{equation*}
\pi(\st) = \pi((0)) \prod_{j=1}^i \frac{\lambda_{T_j}}{A(j)\sum_{k=1}^j \lambda_{T_k}}\left(\frac{\sum_{k=1}^j \lambda_{T_k}}{F_j(T_1, \ldots, T_i)}\right)^{n_j+1}\prod_{j=1}^{\sum_{k=1}^i n_k} \frac{1}{A(i+j)},
\end{equation*}
with $\pi((0))$ acting as a normalising constant. The stationary number of customers waiting to be matched (which due to the different notion now corresponds with $M$), is characterised by
\begin{align*}
\E{z^M} &= \sum_{i=0}^K \pi((0)) z^i \sum_{(T_1, \ldots, T_i)\in\mathcal{T}^{i}} \prod_{j=1}^i \frac{\lambda_{T_j}}{A(j)F_j(T_1, \ldots, T_i)}\times \\
&\qquad\qquad\qquad\times\sum_{\{n_1, \ldots, n_i\}\in\NN_0^i} \prod_{j=1}^{\sum_{k=1}^i n_k} \frac1{A(i+j)}\prod_{j=1}^i \left(\frac{\sum_{k=1}^j \lambda_{T_j}z}{F_j(T_1, \ldots, T_i)}\right)^{n_j},
\end{align*}
where $z \in \{\bar{c}\in\CC:|\bar{c}|<1\}$, due to Corollary \ref{cor:jointPGFnumberCustPresentGeneral}. Likewise, Equation \eqref{eq:LSTScSimple} leads, for any $s\in\{\bar{c}\in\CC:\Re(\bar(c))>0\}$ and any $l \in \{1, \ldots, K\}$ to
\begin{align*}
\E{e^{-sS_{c_l}}} &= \sum_{i=0}^{K} \pi((0)) \sum_{(T_1, \ldots, T_i)\in\mathcal{T}^{i}}\frac{\la_{t_l}-s\ind{t_l \in \{T_1, \ldots, T_i\}}}{\la_{t_l}}\prod_{j=1}^i \frac{\lambda_{T_j}}{A(j)F_j(T_1, \ldots, T_i)} \times \\
&\qquad\qquad\qquad\times \sum_{\{n_1, \ldots, n_i\}\in\NN_0^i} \prod_{j=1}^{\sum_{k=1}^i n_k} \frac 1{A(i+j)}\prod_{j=1}^i \left(\frac{\sum_{k=1}^j \lambda_{T_k}-s\ind{t_l\in\{T_1, \ldots, T_i\}}}{F_j(T_1, \ldots, T_i)}\right)^{n_j}.
\end{align*}

\begin{remark}
As mentioned, the results in this section are very similar to those for the redundancy-$d$ COC model in Section \ref{sec:coc}. This is not surprising, given the fact that in \cite{AdanRighterWeiss1, AdanRighterWeiss2} it is shown that matching models and redundancy models reveal a big similarity. In particular, for a matching model of the sort mentioned in this section, a redundancy model with similar parameter settings can be formulated so that the number of customers in the system is sample path equivalent for both systems (cf.\ \cite[Theorem 3.1]{AdanRighterWeiss2}). The reason why the results in Section \ref{sec:coc} and this section do not completely coincide is that the matching model we presented is more general. In Section \ref{sec:coc}, a redundancy-$d$ setting was assumed, while in this section, we took departure rates dependent on the number of customers in the system through the function $A(\cdot)$. Therefore, the results in Section \ref{sec:coc} can be recovered by choosing $K = \binom{K}{d}$, $\lambda_{T_j} = \frac{\la}{\binom{k}{d}}$ and $A(n)=1$ for all $n \in \NN$. 
\end{remark}

\section*{Acknowledgements}
The research of U. Ayesta, T. Bodas and I.M. Verloop was partially supported by the French ”Agence Nationale de la Recherche (ANR)” through the project ANR-15-CE25-0004 (ANR JCJC RACON). The research of J.L. Dorsman was funded by the NWO Gravitation project NETWORKS, grant number 024.002.003.

\bibliographystyle{plain}
\bibliography{bib}

\begin{thebibliography}{10}

\bibitem{Adanmatching18}
I.~J. B.~F. Adan, A.~Busic, J.~Mairesse, and G.~Weiss.
\newblock Reversibility and further properties of {FCFS} infinite bipartite
  matching.
\newblock {\em Mathematics of Operations Research}, 43(2):598--621, 2018.

\bibitem{AdanHurkensWeiss}
I.~J. B.~F. Adan, C.~Hurkens, and G.~Weiss.
\newblock A reversible {Erlang} loss system with multitype customers and
  multitype servers.
\newblock {\em Probability in the Engineering and Informational Sciences},
  24(4):535--548, 2010.

\bibitem{AdanRighterWeiss2}
I.~J. B.~F. Adan, I.~Kleiner, R.~Righter, and G.~Weiss.
\newblock {FCFS} parallel service systems and matching models.
\newblock 2018.
\newblock Submitted. Available at https://arxiv.org/abs/1805.04266.

\bibitem{AdanRighterWeiss1}
I.~J. B.~F. Adan, R.~Righter, and G.~Weiss.
\newblock {FCFS} parallel service systems and matching models.
\newblock In {\em Proceedings of the 11th EAI International Conference on
  Performance Evaluation Methodologies and Tools (Valuetools)}, pages 106--112.
  ACM, 2017.

\bibitem{AdanEtAl12}
I.~J. B.~F. Adan and G.~Weiss.
\newblock Exact {FCFS} matching rates for two infinite multitype sequences.
\newblock {\em Operations research}, 60(2):475--489, 2012.

\bibitem{AdanWeiss}
I.~J. B.~F. Adan and G.~Weiss.
\newblock A skill based parallel service system under {FCFS-ALIS}: steady
  state, overloads, and abandonments.
\newblock {\em Stochastic Systems}, 4(1):250--299, 2014.

\bibitem{Ayesta18}
U.~Ayesta, T.~Bodas, and I.~M. Verloop.
\newblock On a unifying product form framework for redundancy models.
\newblock {\em Performance Evaluation}, 127-128:93 -- 119, 2018.

\bibitem{BCMP75}
F.~Baskett, K.~M. Chandy, R.~R. Muntz, and F.~G. Palacios.
\newblock Open, closed, and mixed networks of queues with different classes of
  customers.
\newblock {\em Journal of the ACM}, 22(2):248--260, 1975.

\bibitem{Bonald17a}
T.~Bonald and C.~Comte.
\newblock Balanced fair resource sharing in computer clusters.
\newblock {\em Performance Evaluation}, 116:70--83, 2017.

\bibitem{Xiuli11}
X.~Chao.
\newblock Networks with customers, signals, and product form solutions.
\newblock In R.~J. Boucherie and N.~M. Van~Dijk, editors, {\em Queueing
  Networks}, volume 154 of {\em International Series in Operations Research \&
  Management Science}, pages 217--268. Springer US, 2011.

\bibitem{Crosby90}
S.~Crosby and A.~E. Krzesinski.
\newblock Product from solutions for multiserver centres with concurrent
  classes of customers.
\newblock {\em Performance Evaluation}, 11(4):265--281, 1990.

\bibitem{Foster}
F.~G. Foster.
\newblock On the stochastic matrices associated with certain queuing processes.
\newblock {\em The Annals of Mathematical Statistics}, 24(3):355--360, 1953.

\bibitem{Gardner17}
K.~Gardner, M.~Harchol-Balter, A.~Scheller-Wolf, M.~Velednitsky, and
  S.~Zbarsky.
\newblock Redundancy-d: The power of d choices for redundancy.
\newblock {\em Operations Research}, 65(4):1078--1094, 2017.

\bibitem{Gardner16}
K.~Gardner, S.~Zbarsky, S.~Doroudi, M.~Harchol-Balter, E.~Hyyti{\"a}, and
  A.~Scheller-Wolf.
\newblock Queueing with redundant requests: exact analysis.
\newblock {\em Queueing Systems}, 83(3-4):227--259, 2016.

\bibitem{Gumbel}
H.~Gumbel.
\newblock Waiting lines with heterogeneous servers.
\newblock {\em Operations Research}, 8(4):504--511, 1960.

\bibitem{HellemansVanHoudt}
T.~Hellemans and B.~Van~Houdt.
\newblock On the power-of-d-choices with least loaded server selection.
\newblock {\em Proceedings of the ACM on Measurement and Analysis of Computing
  Systems}, 2(2):27, 2018.

\bibitem{Jackson57}
J.R. Jackson.
\newblock Networks of waiting lines.
\newblock {\em Operations Research}, 5:516--523, 1957.

\bibitem{KeilsonServi}
J.~Keilson and L.~D. Servi.
\newblock The distributional form of {L}ittle's law and the {F}uhrmann-{C}ooper
  decomposition.
\newblock {\em Operations Research Letters}, 9:239--247, 1990.

\bibitem{kelly76}
F.~P. Kelly.
\newblock {\em Stochastic Networks and Reversibility}.
\newblock Wiley, Chichester, 1979.

\bibitem{Krzesinski11}
A.~E. Krzesinski.
\newblock Order independent queues.
\newblock In R.~J. Boucherie and N.~M. Van~Dijk, editors, {\em Queueing
  Networks}, volume 154 of {\em International Series in Operations Research \&
  Management Science}, pages 85--120. Springer US, 2011.

\bibitem{Boudec}
J.~Le~Boudec.
\newblock A {BCMP} extension to multiserver stations with concurrent classes of
  customers.
\newblock In {\em Proceedings of {ACM} {SIGMETRICS}}, pages 78--91, 1986.

\bibitem{Visschers12}
J.~Visschers, I.~J. B.~F. Adan, and G.~Weiss.
\newblock A product form solution to a system with multi-type jobs and
  multi-type servers.
\newblock {\em Queueing Systems}, 70(3):269--298, 2012.

\end{thebibliography}

\appendix
\section{Completion of proof of Theorem \ref{thm:statDist}}\label{app:thm:statDist}
\begin{proof}
We complete the proof of Theorem \ref{thm:statDist}. More particularly, we show below that \eqref{eq:statDist} satisfies \eqref{eq:balanceEquationArrival}, \eqref{eq:balanceEquationDepartureRelease} and \eqref{eq:balanceEquationDepartureShift}.

\subparagraph{The stationary distribution \eqref{eq:statDist} satisfies \eqref{eq:balanceEquationArrival}.}
Note that in case \eqref{eq:statDist} holds, for $n_i>0$, the right-hand side of \eqref{eq:balanceEquationArrival} can be rewritten as follows. We have that
\begingroup
\allowdisplaybreaks
\begin{align*}
&\la_{\mathcal{U}(\{T_1, \ldots, T_i\})}\pi((T_1, n_1, \ldots, T_i, n_i-1))\\
&\;=\; \la_{\mathcal{U}(\{T_1, \ldots, T_i\})}\pi((0))\frac{\Pi_{\lambda}(\{{T}_1, \ldots, {T}_i\})}{\Pi_{k}({T}_1,\ldots,{T}_i)}\left(\prod_{j=1}^i{\alpha_j}^{n_j}\right)\frac{1}{\alpha_i}\left(\prod_{j=1}^{\phi(\st)}\frac{1}{\eta(j)}\right)\eta(\phi(\st)) \\
&\;=\;\eta(\phi(\st))k(T_1, \ldots, T_i)\pi((0))\frac{\Pi_{\lambda}(\{{T}_1, \ldots, {T}_i\})}{\Pi_{k}({T}_1,\ldots,{T}_i)}\prod_{j=1}^i{\alpha_j}^{n_j} \prod_{j=1}^{\phi(\st)}\frac{1}{\eta(j)} \\
&\;=\;\mu(\st)\pi(\st),
\end{align*}
\endgroup
where the last equality follows by virtue of \eqref{eq:mu}. As this is the left-hand side of \eqref{eq:balanceEquationArrival}, we conclude that \eqref{eq:statDist} satisfies this equation for $n_i>0$. Similarly, for $n_i=0$, the right-hand side of \eqref{eq:balanceEquationArrival} can be rewritten as
\begingroup
\allowdisplaybreaks
\begin{align*}
&\la_{T_i}(\{T_1, \ldots, T_{i-1})\pi((T_1, n_1, \ldots, T_{i-1}, n_{i-1}))\\
&\;=\; \la_{T_i}(\{T_1, \ldots, T_{i-1})\pi((0))\frac{\Pi_{\lambda}(\{{T}_1, \ldots, {T}_{i-1}\})}{\Pi_{k}({T}_{1},\ldots,{T}_{i-1})}\prod_{j=1}^{i-1}{\alpha_j}^{n_j} \left(\prod_{j=1}^{\phi(\st)}\frac{1}{\eta(j)}\right)\eta(\phi(\st)) \\
&\;=\; \eta(\phi(\st))k(T_1, \ldots, T_i)\pi((0))\frac{\Pi_{\lambda}(\{{T}_1, \ldots, {T}_i\})}{\Pi_{k}({T}_1,\ldots,{T}_i)}\left(\prod_{j=1}^{i-1}{\alpha_j}^{n_j}\right)\alpha_i^0 \prod_{j=1}^{\phi(\st)}\frac{1}{\eta(j)} \\
&\;=\;\mu(\st)\pi(\st). \\
\end{align*}
\endgroup
Again, the last equality follows from \eqref{eq:mu}, and we have shown that \eqref{eq:statDist} satisfies \eqref{eq:balanceEquationArrival}.

\subparagraph{The stationary distribution \eqref{eq:statDist} satisfies \eqref{eq:balanceEquationDepartureRelease}.}
We follow the same strategy as before. That is, we substitute \eqref{eq:statDist} into the right-hand side of \eqref{eq:balanceEquationDepartureRelease}.
We obtain
\begingroup
\allowdisplaybreaks
\begin{align*}
&\sum_{k=0}^i\sum_{n=0}^{n_k} \mu_T(\mbr{k}{n}{\st}{T})\rprob{k}{n}{\st}{T}\pi(\mbr{k}{n}{\st}{T}) \\
&\;=\; \sum_{k=0}^i\sum_{n=0}^{n_k} \eta(\phi(\st)+1)(k(T_1, \ldots,T_k, T)-k(T_1, \ldots, T_k)) \beta_k(T)^n\left(\prod_{j=k+1}^i\beta_{j}(T)^{n_{j}}\right)\times \\
&\qquad\qquad\times\pi((0))  \frac{\Pi_{\lambda}(\{{T}_1, \ldots, {T}_k, T, T_{k+1}, \ldots, T_i\})}{\Pi_k(T_1, \ldots, T_{k}, T, T_{k+1}, \ldots, T_i)}\left(\prod_{j=1}^{k-1}\alpha_j^{n_j}\right)\alpha_k^{n_k-n}\left(\frac{\lambda_{\mathcal{U}(\{T_1, \ldots, T_k, T\})}}{k(T_1, \ldots, T_k, T)}\right)^n\times \\
&\qquad\qquad\times\left(\prod_{j=k+1}^i\left(\frac{\lambda_{\mathcal{U}(\{T_1, \ldots, T_j,T\})}}{k(T_1, \ldots, T_k, T, T_{k+1}, \ldots, T_j)}\right)^{n_j}\right) \prod_{j=1}^{\phi(\st)+1}\frac{1}{\eta(j)} \\
&\;=\; \pi(0)\left(\prod_{j=1}^{\phi(\st)}\frac{1}{\eta(j)}\right)\sum_{k=0}^i(k(T_1, \ldots, T_k, T)-k(T_1, \ldots, T_k))\frac{\Pi_{\lambda}(\{{T}_1, \ldots, \ldots, T_i, T\})}{\Pi_k(T_1, \ldots, T_k, T, T_{k+1}, \ldots, T_i)}\left(\prod_{j=1}^{k-1}\alpha_j^{n_j}\right) \times\\
&\qquad\qquad\times \left(\prod_{j=k+1}^i\left(\frac{\beta_{j}(T)\lambda_{\mathcal{U}(\{T_1,  \ldots, T_j,T\})}}{k(T_1, \ldots, T_k, T, T_{k+1}, \ldots, T_j)}\right)^{n_j}\right) \sum_{n=0}^{n_k}\alpha^{n_k-n}\left(\frac{\beta_k(T)\lambda_{\mathcal{U}(\{T_1, \ldots, T_k, T\})}}{k(T_1, \ldots, T_k, T)}\right)^n \\
&\;=\;\la_T(\{T_1, \ldots, T_i\})\pi(0)\frac{\Pi_\lambda(\{T_1, \ldots, T_i)\}}{\Pi_k(T_1, \ldots, T_k)}\left(\prod_{j=1}^i\alpha_j^{n_j}\right)\left(\prod_{j=1}^{\phi(\st)}\frac{1}{\eta(j)}\right) \times \\
&\qquad\qquad\times\sum_{k=0}^i\frac{k(T_1, \ldots, T_k, T)-k(T_1, \ldots, T_k)}{k(T_1, \ldots, T_k, T)}\prod_{j=k+1}^i\left(\frac{\lambda_{\mathcal{U}}(\{T_1, \ldots, T_j\})}{\alpha_j k(T_1, \ldots, T_j, T)}\right)^{n_j} \times \\
&\qquad\qquad\times\prod_{j=k+1}^i\frac{k(T_1, \ldots, T_j)}{k(T_1, \ldots, T_j, T)}\sum_{n=0}^{n_k} \left(\frac{\lambda_{\mathcal{U}(\{T_1, \ldots, T_k\})}}{\alpha_k k(T_1, \ldots, T_k, T)}\right)^n \\
&\;=\; \la_T(\{T_1, \ldots, T_i\})\pi(\st) \sum_{k=0}^i\frac{k(T_1, \ldots, T_k, T)-k(T_1, \ldots, T_k)}{k(T_1, \ldots, T_k, T)} \times\\
&\qquad\qquad\times\prod_{j=k+1}^i\left(\frac{k(T_1, \ldots, T_j)}{k(T_1, \ldots, T_j, T)}\right)^{n_j+1}\sum_{n=0}^{n_k} \left(\frac{k(T_1, \ldots, T_k)}{k(T_1, \ldots, T_k, T)}\right)^n.
\end{align*}
\endgroup
Here, the second equality follows from Condition \ref{cdn:assignment}. The third equality follows from Condition \ref{cdn:OI} and the fact that $\Pi_k(T_1, \ldots, T_j, T, T_{k+1}, \ldots, T_i) = k(T_1, \ldots, T_i, T)\Pi_k(T_1, \ldots, T_i)\prod_{j=k+1}^i\frac{k(T_1, \ldots, T_j, T)}{k(T_1, \ldots, T_j)}$, which is straightforwardly verifiable. It is left to show that the last line of the display equals $\la_T(\{T_1, \ldots, T_i\})\pi(\st)$. We can indeed show that the outer sum in the last line of the display indeed equals one. We note that for $k=0$, we have that $n_0=0$, since there can be no customers in the system that can claim $T$. Then, we have  
\begingroup
\allowdisplaybreaks
\begin{align}
&\sum_{k=0}^i\frac{k(T_1, \ldots, T_k T)-k(T_1, \ldots, T_k)}{k(T_1, \ldots, T_k, T)}\prod_{j=k+1}^i\left(\frac{k(T_1, \ldots, T_j)}{k(T_1, \ldots, T_j, T)}\right)^{n_j+1}\sum_{n=0}^{n_k} \left(\frac{k(T_1, \ldots, T_k)}{k(T_1, \ldots, T_k, T)}\right)^n \notag\\
&\;=\;\sum_{k=0}^i\left(1-\frac{k(T_1, \ldots, T_k)}{k(T_1, \ldots, T_k, T)}\right)\prod_{j=k+1}^i\left(\frac{k(T_1, \ldots, T_j)}{k(1, \ldots, T_j, T)}\right)^{n_j+1}\sum_{n=0}^{n_k} \left(\frac{k(T_1, \ldots, T_k)}{k(T_1 \ldots, T_k, T)}\right)^n \notag\\
&\;=\;\sum_{k=0}^i\left(1-\left(\frac{k(T_1, \ldots, T_k)}{k(T_1, \ldots, T_k, T)}\right)^{n_k+1}\right)\prod_{j=k+1}^i\left(\frac{k(T_1, \ldots, T_j)}{k(T_1, \ldots, T_j, T)}\right)^{n_j+1} \notag\\
&\;=\;\sum_{k=0}^i \left(\prod_{j=k+1}^i\left(\frac{k(T_1, \ldots, T_j)}{k(T_1, \ldots, T_j, T)}\right)^{n_j+1}-\prod_{j=k}^i\left(\frac{k(T_1, \ldots, T_j)}{k(T_1, \ldots, T_j, T)}\right)^{n_j+1}\right) \notag\\
&\;=\; \prod_{j=i+1}^i\left(\frac{k(T_1, \ldots, T_j)}{k(T_1, \ldots, T_j, T)}\right)^{n_j+1}-\prod_{j=0}^i\left(\frac{k(T_1, \ldots, T_j)}{k(T_1, \ldots, T_j, T)}\right)^{n_j+1} \notag\\
&\;=\;1. \label{eq:kTermEqualsOne}
\end{align}
\endgroup
The last equality follows since the first product equals one by definition, and the second product equals zero by the fact that $k(\emptyset) = 0$. Therefore, we have shown that \eqref{eq:statDist} satisfies \eqref{eq:balanceEquationDepartureRelease}.

\subparagraph{The stationary distribution \eqref{eq:statDist} satisfies \eqref{eq:balanceEquationDepartureShift}.}
Again, we follow a similar line of reasoning as before. Manipulating the right-hand side of \eqref{eq:balanceEquationDepartureShift}, we have
\begingroup
\allowdisplaybreaks
\begin{align*}
&\sum_{j=1}^i \sum_{k=0}^{j-1}\sum_{n=0}^{n_k} \mu_{T_j}(\mbs{k}{n}{\st}{T_j})\sprob{k}{n,}{s}{T_j}\pi(\mbs{k}{n}{\st}{T_j}))\\
&\;=\; \sum_{j=1}^i \sum_{k=0}^{j-1} \sum_{n=0}^{n_k} \eta(\phi(\st)+1) \left(k(T_1, \ldots, T_k, j)-k(T_1, \ldots, T_k)\right) \beta_k(T_j)^n\left(\prod_{l=k+1}^{j-1} \beta_l(T_j)^{n_l}\right)(1-\beta_{j-1}(T_j)) \times \\
&\qquad\qquad\times \pi((0)) \frac{\Pi_\lambda(\{T_1, \ldots, T_k, T_j, T_{k+1}, \ldots, T_{j-1}, T_{j+1}, \ldots, T_i\})}{\Pi_k(T_1, \ldots, T_k, T_j, T_{k+1}, \ldots, T_{j-1}, T_{j+1}, \ldots, T_i)} \left(\prod_{l=1}^{k-1}\alpha_l^{n_l}\right)\alpha_k^{n_k-n}\times\\
&\qquad\qquad\times\left(\frac{\lambda_{\mathcal{U}(\{T_1, \ldots, T_k, T_j\})}}{k(T_1, \ldots, T_j, T_k)}\right)^n \left(\prod_{l=k+1}^{j-1}\left(\frac{\lambda_{\mathcal{U}(\{T_1, \ldots, T_l, T_j\})}}{k(T_1, \ldots, T_k, T_j, T_{k+1}, \ldots, T_l)}\right)^{n_l}\right)\times\\
&\qquad\qquad\times\left(\frac{\lambda_{\mathcal{U}(\{T_1, \ldots, T_j\})}}{k(T_1, \ldots, T_k, T_j, T_{k+1}, \ldots, T_{j-1})}\right)^{n_j+1}\times \\
&\qquad\qquad\times\left(\prod_{l=j+1}^i \left(\frac{\lambda_{\mathcal{U}(\{T_1, \ldots, T_l\})}}{k(T_1, \ldots, T_k, T_j, T_{k+1}, \ldots, T_{j-1}, T_{j+1}, \ldots, T_l)}\right)^{n_l}\right)\prod_{l=1}^{\phi(\st)+1}\frac{1}{\eta(l)}. \\
&\;=\; \pi((0)) \frac{\prodlambda}{\prodk} \left(\prod_{j=1}^{\phi(\st)} \frac{1}{\eta(j)}\right) \sum_{j=1}^i \left(1- \frac{\lambda_{\mathcal{U}(\{T_1, \ldots, T_{j-1}\})}}{\lambda_{\mathcal{U}(\{T_1, \ldots, T_j\})}}\right)\sum_{k=0}^{j-1} \left(k(T_1, \ldots, T_k, T_j)-k(T_1, \ldots, T_k)\right) \times \\
&\qquad\qquad\times\left(\frac{\prod_{l=k+1}^{j} k(T_1, \ldots, T_l)}{\prod_{l=k}^{j-1} k(T_1, \ldots, T_l, T_j)}\right)\left(\prod_{l=1}^{k-1}\alpha_l^{n_l}\right)\left(\prod_{l=k+1}^{j-1}\left(\frac{\lambda_{\mathcal{U}(\{T_1, \ldots, T_l, T_j\})}}{k( T_1, \ldots, T_l, T_j)}\right)^{n_l}\right)
\left(\frac{\lambda_{\mathcal{U}(\{T_1, \ldots, T_j\})}}{k(T_{1}, \ldots, T_j)}\right)^{n_j+1} \times \\
&\qquad\qquad\times\left(\prod_{l=j+1}^i\left(\frac{\lambda_{\mathcal{U}(\{T_1, \ldots, T_l\})}}{k(T_1, \ldots, T_l)}\right)^{n_l}\right)\sum_{n=0}^{n_k}\alpha_k^{n_k-n}\left(\frac{\lambda_{\mathcal{U}(\{T_1, \ldots, T_k})}{k(T_1, \ldots, T_k, T_j)}\right)^n \\
&\;=\;\pi((0)) \frac{\prodlambda}{\prodk} \prod_{j=1}^i \alpha_j^{n_j}\left(\prod_{j=1}^{\phi(\st)}\frac{1}{\eta(j)}\right)\sum_{j=1}^i \left(\lambda_{\mathcal{U}(\{T_1, \ldots, T_j\})}- \lambda_{\mathcal{U}(\{T_1, \ldots, T_{j-1}\})}\right) \times \\
&\qquad\qquad\times\sum_{k=0}^{j-1}\left(1-\frac{k(T_1, \ldots, T_k)}{k(T_1, \ldots, T_k, T_j)}\right)\left(\prod_{l=k+1}^{j-1}\frac{k(T_1, \ldots, T_l)}{k(T_1, \ldots, T_l, T_j)}\right)\left(\prod_{l=k+1}^{j-1}\left(\frac{\lambda_{\mathcal{U}(\{T_1, \ldots, T_l, T_j\})}}{\alpha_j k(T_1, \ldots, T_l, T_j)}\right)^{n_l}\right) \times \\
&\qquad\qquad\times\sum_{n=0}^{n_k}\left(\frac{\lambda_{\mathcal{U}(\{T_1, \ldots, T_k})}{\alpha_k k(T_1, \ldots, T_k, T_j)}\right)^n \\
&\;=\;\pi(\st) \sum_{j=1}^i \left(\lambda_{\mathcal{U}(\{T_1, \ldots, T_j\})}- \lambda_{\mathcal{U}(\{T_1, \ldots, T_{j-1}\})}\right) \times \\
&\qquad\qquad\times \sum_{k=0}^{j-1}\left(1-\frac{k(T_1, \ldots, T_k)}{k(T_1, \ldots, T_k, T_j)}\right)\left(\prod_{l=k+1}^{j-1}\left(\frac{k( T_1, \ldots, T_l)}{k(T_1, \ldots, T_l, T_j)}\right)^{n_l+1}\right)\sum_{n=0}^{n_k}\left(\frac{ k( T_1, \ldots, T_k)}{ k(T_1, \ldots, T_k, T_j)}\right)^n \\
&\;=\; \lambda_{\mathcal{U}(\{T_1, \ldots, T_i\})}\pi(\st),
\end{align*}
\endgroup
which is the left-hand side of \eqref{eq:balanceEquationDepartureShift}. In the second equality, we used Condition \ref{cdn:assignment} and Condition \ref{cdn:OI}. The final equality follows by invoking \eqref{eq:kTermEqualsOne} with $i=j-1$ and the fact that $\sum_{j=1}^i \left(\lambda_{\mathcal{U}(\{T_1, \ldots, T_j\})}- \lambda_{\mathcal{U}(\{T_1, \ldots, T_{j-1}\})}\right) = \lambda_{\mathcal{U}(\{T_1, \ldots, T_i\})}$. As we have rewritten the right-hand side of \eqref{eq:balanceEquationDepartureShift} into its left-hand side, we conclude that \eqref{eq:statDist} satisfies \eqref{eq:balanceEquationDepartureShift}, which completes the proof.
\end{proof}

\section{Proof of Theorem \ref{thm:jointPGFNumberCustWaitingClass}}\label{app:thm:jointPGFNumberCustWaitingClass}
\begin{proof} To prove the theorem, we first consider $N_{j}^{(c)}$, which we recall to be the number of type-$c$ customers among $N_j$, which represents the number of customers in the central queue between those that have claimed $T_j$ and $T_{j+1}$. More particularly, we first focus on (the joint PGF of) the stationary distribution of $\{N_{j}^{(c)}: j \in \{1, \ldots, K\}, c \in \mathcal{C}\}$   using Theorem \ref{thm:statDist} as a starting point. Then, the expression for $\{N^{(c)}: c \in \mathcal{C}\}$ as given in Theorem \ref{thm:jointPGFNumberCustWaitingClass} will follow almost immediately.

We use Theorem \ref{thm:statDist} as a starting point. From this theorem, we gather that the stationary distribution of the model at hand satisfies 
\begin{equation}\label{eq:stdist}
\pi(T_1, n_1, \ldots, T_i, n_i) = \pi((0))\frac{\prodlambda}{\prodk} \prod_{j=1}^i\alpha_j^{n_j} \prod_{j=1}^i\frac 1{\eta(j)} \prod_{j=1}^{\sum_{k=1}^i n_k} \frac 1{\eta(i+j)}.
\end{equation}
By the dynamics of the arrival process, we next note that $N_j^{(c)}$ is binomially distributed with parameters $N_j$ and $\theta_{c,j} := \frac{\lambda_c\ind{c \in \mathcal{U}(T_1, \ldots, T_j)}}{\lambda_{\mathcal{U}({T_1, \ldots, T_j}})}$. The indicator function in this expression reflects the fact that in order for $N_{j}^{(c)}$ to be positive, any token in the set $\mathcal{T}\backslash\{T_1, \ldots, T_j\}$ must reject class-$c$ jobs. More generally, the set $\{N_j^{(c)}: c \in \mathcal{C}\}$ is multinomially distributed with population size parameter $N_j$ and probability parameters $\{\theta_{c,j}:c \in \mathcal{C}\}$. We also observe that, given the values of $N_1, N_2, \ldots$, the sets $\{N_1^{(c)}:c \in \mathcal{C}\}, \{N_2^{(c)}:c \in \mathcal{C}\}, \ldots$ are independent, so that
\begin{equation}\label{eq:NjcGivenNj}
\Pro{\bigcap_{j\in\{1, \ldots, i\}, c \in \mathcal{C}} \{N_j^{(c)} = n_j^{(c)}\} \mid \bigcap_{j=1}^i\{N_j = n_j\}} = \prod_{j=1}^i \frac{n_j!}{\prod_{c \in \mathcal{C}}n_j^{(c)}!}\prod_{c \in \mathcal{C}} \theta_c^{n_{c,j}}.
\end{equation}
Using \eqref{eq:NjcGivenNj} and applying Newton's binomium, respectively, immediately leads to the following joint PGF. For $z_{c,j} \in \{\bar{c}\in \CC : |\bar{c}|\le 1\}$,
\begin{align}
&\E{\prod_{c \in \mathcal{C}} \prod_{j=1}^i z_{c,j}^{N_j^{(c)}} \mid \st = (T_1, n_1, \ldots, T_i, n_i)} \notag \\
&= \sum_{\{n_j^{(c)}: c \in \mathcal C\}:\sum_{c \in \mathcal{C}} n_j^{(c)} = n_j}\frac{n_j!}{\prod_{c \in \mathcal{C}}n_j^{(c)}!}\prod_{c \in \mathcal{C}}(\theta_{c,j}z_{c,j})^{n_{c,j}} =  \prod_{j=1}^i\left(\sum_{c \in \mathcal{C}} \theta_{c,j}z_{c,j}\right)^{n_j}. \label{eq:NjcGivenState}
\end{align}
Unconditioning using \eqref{eq:stdist} now leads to
\begin{align}
&\E{\prod_{c \in \mathcal{C}}\prod_{j=1}^K z_{c,j}^{N_j^{(c)}}} \notag \\
&=\sum_{i=0}^K \sum_{(T_1, \ldots, T_i)\in \mathcal{T}^{i}} \sum_{(n_1, \ldots, n_i) \in \NN_0^i} \pi((T_1, n_1, \ldots, T_i, n_i)) \E{\prod_{c \in \mathcal{C}}\prod_{j=1}^K z_{c,j}^{N_j^{(c)}}\mid \st = (T_1, n_1, \ldots, T_i, n_i)} \notag \\
&=\sum_{i=0}^K \sum_{(T_1, \ldots, T_i)\in \mathcal{T}^{i}} \pi((0))\frac{\prodlambda}{\prodk} \prod_{j=1}^i\frac 1{\eta(j)}\sum_{\{n_1, \ldots, n_i\} \in \NN_0^i}\prod_{j=1}^{\sum_{k=1}^i n_k} \frac 1{\eta(i+j)} \prod_{j=1}^i (\al_j\sum_{c \in \mathcal{C}}\theta_{c,j}z_{c,j})^{n_j}.  
\end{align}
Finally, Equation \eqref{eq:jointPGFNcUnconditional} follows by combining this expression with
\begin{equation*}
\E{\prod_{c \in \mathcal{C}}z_c^{N^{(c)}}} = \E{\prod_{c \in \mathcal{C}}z_c^{\sum_{j=1}^k N_j^{(c)}}} = \E{\prod_{c \in \mathcal{C}}\prod_{j=1}^k z_c^{N_j^{(c)}}}.
\end{equation*}
\end{proof}

\section{Proof of Theorem \ref{thm:jointPGFnumberCustPresentClass}}\label{app:thm:jointPGFnumberCustPresentClass}
\begin{proof} The proof hinges on the notion that, if there are at least $j$ tokens activated, either $M_j^{(c)} = N_j^{(c)}+1$ if token $T_j$ is claimed by a type-$c$ customer, or $M_j^{(c)} = N_j^{(c)}$ otherwise. This leads to
\begin{align*}
&\E{\prod_{c\in\mathcal{C}}\prod_{j=1}^i z_{c,j}^{M_j^{(c)}}} \notag \\
&= \sum_{i=0}^K \sum_{(T_1, \ldots, T_i)\in \mathcal{T}^{i}} \sum_{(n_1, \ldots, n_i) \in \NN_0^i} \pi((T_1, n_1, \ldots, T_i, n_i))\E{\prod_{c \in \mathcal{C}} \prod_{j=1}^i z_{c,j}^{N_j^{(c)}} \mid \st = (T_1, n_1, \ldots, T_i, n_i)}\times\\
&\qquad\qquad\times\sum_{(c_1, \ldots, c_i)\in \mathcal{C}^i}G_{c_1, \ldots, c_i}(T_1, n_1, \ldots, T_i, n_i) \prod_{j=1}^iz_{c,j}
\end{align*}
The theorem now follows by substitution of \eqref{eq:stdist} and \eqref{eq:NjcGivenState} into this expression and realising that $\E{\prod_{c \in \mathcal{C}}z_c^{M^{(c)}}} = \E{\prod_{c \in \mathcal{C}}\prod_{j=1}^k z_c^{M_j^{(c)}}}$. 
\end{proof}

\section{Expressions for performance measures when $\eta(\cdot) = 1$}\label{app:eta1}
It follows by substitution and subsequent simplification of \eqref{eq:jointPGFNcUnconditional}, \eqref{eq:jointPGFUnconditionalSimple}, \eqref{eq:jointPGFMUnconditional}, \eqref{eq:LSTWc} and \eqref{eq:LSTSc} that, when $\eta(j) = 1$ for all $j \in \NN$,
\begin{equation*}
\E{\prod_{c \in \mathcal{C}}z_c^{N^{(c)}}}=\sum_{i=0}^K \sum_{(T_1, \ldots, T_i)\in \mathcal{T}^{i}} \pi((0))\frac{\prodlambda}{\prodk}  \prod_{j=1}^i \frac1{1-\al_j\sum_{c \in \mathcal{C}}\theta_{c,j}z_{c}},
\end{equation*}
\begin{equation*}
\E{z^N}=\sum_{i=0}^K \sum_{(T_1, \ldots, T_i)\in \mathcal{T}^{i}} \pi((0))\frac{\prodlambda}{\prodk}  \prod_{j=1}^i \frac1{1-\al_jz}, 
\end{equation*}
\begin{equation*}
\E{z^M}=\sum_{i=0}^K \sum_{(T_1, \ldots, T_i)\in \mathcal{T}^{i}} \pi((0))\frac{\prodlambda}{\prodk} z^i\prod_{j=1}^i \frac1{1-\al_jz} 
\end{equation*}
and
\begin{equation*}
\E{e^{-sW_c}} = \sum_{i=0}^K \sum_{(T_1, \ldots, T_i)\in \mathcal{T}^{i}} \pi((0))\prod_{j=1}^i \frac{\lambda_{T_j}(T_1, \ldots, T_{j-1})}{k(T_1, \ldots, T_j)-\lambda_{\mathcal{U}(\{T_1, \ldots, T_j\})}+s\ind{c\in \mathcal \mathcal{U}(\{T_1, \ldots, T_j\})}}, 
\end{equation*}
with $\pi((0))$ as given in \eqref{eq:piZeroSimple}.

\section{Proof of Theorem \ref{lem:keep}}\label{app:lem:keep}
\begin{proof}
We first assume that $(1)$ holds, that is, we are given a model that fits in the OI queue framework. 
In the remainder of this proof, we will use the notion of indistinguishable tokens and token labels as introduced in Section \ref{sec:L}. We define the following token sets $\mathcal{T}_c$. Each token set of customer type~$c$ consists of an infinite number of indistinguishable tokens with label $c$. Thus, every customer type has its dedicated token label.  Then, the state $\st^{(L)} = (L_1,   \ldots, L_n)$ gives exactly the same information as the state $\stOI=(c_1, \ldots, c_n)$, hence both state descriptors are equivalent. When setting  $\mu_{L_j}(\st^{(L)}) = {\mu}_{j}^{(OI)}(\stOI)$, the token-based central queue describes exactly the same model as the OI queue. 

What is left to show is that the token-based central queue satisfies Condition~\ref{cdn:assignment} and Condition~\ref{cdn:OI}. 
Condition~\ref{cdn:OI} follows directly, since $\mu_{L_j}(\st^{(L)})= {\mu}_{j}^{(OI)}(\stOI)$ and  $\mu_j^{(OI)}$ satisfies Condition~\ref{cond:X}. 
Since each customer type $c$ has its own dedicated set of indistinguishable tokens with label $c$, we have that $\lambda_{c}(\{L_1, \ldots, L_i \})=\lambda_c$. Therefore, $\prod_{j=1}^{i} \lambda_{L_j}(\{L_1, \ldots L_{j-1} \})=\prod_{j=1}^{i} \lambda_{L_j}$. This expression is independent of the permutation of the $L_j$'s, and since tokens that bear the same label are indistinguishable, Condition~\ref{cdn:assignment} is satisfied. We have hence proved that $(1)\to (2)$.


We now assume that $(2)$ of Theorem \ref{lem:keep} holds, that is, we are given a token-based central queue  where each token set consists of indistinguishable tokens. 
From Lemma~\ref{lem:tau}, it follows that to a given state $\stOI=(c_1,\ldots, c_n)$ (describing the type of each customer), there corresponds a unique state $\st^{(L)} = (L_1, n_1,\ldots, L_i, n_i)$, given by $\tau(\stOI)$. 
We also define the function $\mathcal{\tilde \tau}({\stOI})$ that gives the  unique  activated tokens $(L_1, \ldots, L_i)$ as a function of $\stOI$. 
To prove that (1) of Theorem \ref{lem:keep} holds, that is, the model fits the OI queue framework, we will (i) define functions $\eta^{(OI)}(\cdot)$ and $s_j^{(OI)}(\cdot)$, (ii) show that these functions give rise to an OI queue and  (iii) show that the departure rates under the token-based central queue and the OI queue are sample-path wise equal. 

(i) Since $\phi(\mathcal{\tau}(\stOI)) = n$, we  define $$\eta^{(OI)}(n):=\eta(\phi(\mathcal{\tau}(\stOI))).$$ 
Let $h(j,\stOI):=\sum_{c \in \mathcal{C}} \min( \sum_{l=1}^j \mathbf{1}_{(c_l=c)}  , |\mathcal{T}_c|)$ denote the number of active customers among the first~$j$ customers (for ease of exposition, we assume that any two tokens from any two token sets $\mathcal{T}_{c_a}$ and $\mathcal{T}_{c_b}$, $c_a, c_b\in\mathcal{C}$, are not indistinguishable). 
When in state $\stOI=(c_1,\ldots, c_n)$ and if $\sum_{i=1}^{j} \mathbf{1}_{(c_j=c_i)} \leq |\mathcal{T}_{c_j}|$, then the $j$-th customer is the $h(j,\stOI)$-th customer that has a token. We therefore define
$$
s_j^{(OI)}(\stOI):=
\begin{cases}
s_{{h(j,\stOI)}}(\mathcal{\tilde \tau}(\stOI)) \mbox{ if } \sum_{i=1}^{j} \mathbf{1}_{(c_j=c_i)} \leq |\mathcal{T}_{c_j}|,\\
0, \mbox{  otherwise.}
\end{cases}
$$

(ii) Since Condition~\ref{cdn:OI} is satisfied, it is immediate that $\mu(\stOI)$ satisfies Condition~\ref{cond:X} and hence gives rise to an OI queue. 

(iii) Consider the $j$-th customer. We now show that its departure rate in both systems is the same, which  concludes the proof.  If $\sum_{i=1}^{j} \mathbf{1}_{(c_j=c_i)} \leq |\mathcal{T}_{c(j)}|$, then the $j$-th customer  has a token and is the  $h(j, \stOI)$-th active customer in the queue. 
Its departure rate in the OI queue is 
$\mu_j^{(OI)}(\stOI)= \eta^{(OI)}(n) s_j^{(OI)}(\stOI)  =  \eta(\phi(\mathcal{\tau}(\stOI))) s_{{h(j,\stOI)}}(\mathcal{\tilde \tau}(\stOI))= \mu_{L_{h(j,\stOI)}}(\mathcal{\tau}(\stOI))$, which equals its departure rate in the token-based central queue.  
 If the $j$-th customer is not active, then its departure rate  in the OI queue is $\mu_j^{(OI)}(\stOI)=0$, which equals its departure rate in  the token-based central queue.
\end{proof}

\section{Proof of Corollary~\ref{cor}}
\label{app:cor}
\begin{proof} 
 In the proof of Theorem~\ref{lem:keep} $(1)\to(2)$ it was shown that the OI queue can be seen as a token-based central queue where a token set $\mathcal{T}_c$ of a customer type $c$ consists of infinitely many indistinguishable tokens with label $c$. 
Noting that  $\phi(\st)=n$, $n_j=0$ and $\lambda_{c}(\{L_1, \ldots, L_j\})=\lambda_c$,  from \eqref{eq:statdistL} we recover the product-form stationary distribution~\eqref{eq:OIstat}  for the OI state descriptor.
\end{proof}

\end{document}